\renewcommand{\vec}[1]{\mathbf{#1}}
\newcommand{\de}{\mathrm{d}}
\newcommand{\braket}[2]{ \langle #1 , #2 \rangle }  
\newcommand{\norm}[1]{\bigl\lVert #1 \bigr\rVert} 
\newcommand{\ass}[1]{\left\lvert #1 \right\rvert} 
\newcommand{\dtot}[2]{\frac{\mathrm{d} #1}{\mathrm{d} #2}} 
\newcommand{\ide}[1]{\mathrm{d}#1 \,} 
\renewcommand{\v}{\widetilde{v}}
\newcommand{\F}{\mathscr{F}}
\theoremstyle{plain}
\newtheorem{thm}{Theorem}
\newtheorem*{thm1*}{Theorem 1$^{\mathbf{(r)}}$}
\newtheorem*{thm2*}{Theorem 2$^{\mathbf{(r)}}$}
\newtheorem*{assertion*}{Assertion}
\newtheorem{proposition}{Proposition}
\newtheorem{lemma}{Lemma}
\newtheorem*{lemma*}{Lemma}
\newtheorem{corollary}{Corollary}
\newtheorem*{corollary*}{Corollary}
\theoremstyle{definition}
\newtheorem*{definition*}{Definition}
\newtheorem*{definitions*}{Definitions}
\newtheorem{innercondition}{Condition}
\theoremstyle{remark}
\newtheorem{remark}{Remark}
\newtheorem*{remark*}{Remark}
\newtheorem*{remarks*}{Remarks}
\renewcommand\thesection{\arabic{section}}
\renewcommand\p@subsection{\thesection .}
\begin{document}
\date{\today}
\title{Global Solution of the Electromagnetic Field-Particle System of Equations.}
\author{Marco Falconi}
\thanks{Current affiliation: IRMAR, Université de Rennes I. 263 Av. Général Leclerc CS 74205, 35042 Rennes.}
\affiliation{Dipartimento di Matematica, Università di Bologna\\Piazza di Porta San Donato 5 - 40126 Bologna, Italy}
\email[]{m.falconi@unibo.it}
\begin{abstract}
In this paper we discuss global existence of the solution of the Maxwell and Newton system of equations, describing the interaction of a rigid charge distribution with the electromagnetic field it generates. A unique solution is proved to exist (for regular charge distributions) on suitable homogeneous and non-homogeneous Sobolev spaces, for the electromagnetic field, and on coordinate and velocity space for the charge; provided initial data belong to the subspace that satisfies the divergence part of Maxwell's equations.
\end{abstract}
\maketitle

\section{Introduction.}
\label{sec:introduction}

We are interested in the following system of equations: let $e\in\mathds{R}$, $\varphi$ a sufficiently regular function; then the Maxwell-Newton equations are written in three spatial dimensions as
\begin{equation}
  \label{eq:1}\tag{M-N}
    \begin{aligned}
      &\left\{\begin{aligned}
       \partial_t &B + \nabla\times E=0\\
       \partial_t &E - \nabla\times B=-j
      \end{aligned}\right. \mspace{20mu}
      \left\{\begin{aligned}
        \nabla\cdot &E=\rho\\
        \nabla\cdot &B=0
      \end{aligned}\right.\\
      &\left\{\begin{aligned}
        \dot{\xi}&= v\\
        \dot{v}&= e[(\varphi*E)(\xi)+v\times(\varphi*B)(\xi)]
      \end{aligned}\right.
    \end{aligned}
\qquad ;
\end{equation}
with
\begin{equation}\label{eq:13}
  j=ev\varphi(\xi-x)\; ,\quad  \rho=e\varphi(\xi-x)\; .
\end{equation}
This system can be used to describe motion of a non-relativistic rigid particle, with an extended charge distribution $e\varphi$, interacting with its own electromagnetic field (in this case we could need some additional physical conditions, such as $\int\de x \varphi=1$, however these conditions are not necessary for the existence of a solution). So $\xi,v\in\mathds{R}^3$ will be the position and velocity of the charge's center of mass, $E,B$ the electric and magnetic field vectors. We remark that in \eqref{eq:1} charge is conserved, i.e.
\begin{equation}
  \label{eq:2}
  \partial_t\rho+\nabla\cdot j=0\; .
\end{equation}
It is useful to construct the electromagnetic tensor $F^{\mu\nu}$:
\begin{equation*}
  F^{\mu\nu}=\left(
  \begin{array}{cccc}
    0&E_1&E_2&E_3\\
   -E_1&0&B_3&-B_2\\
   -E_2&-B_3&0&B_1\\
   -E_3&B_2&-B_1&0
  \end{array}\right)\; .
\end{equation*}
Therefore we make the following identifications:
\begin{gather*}
  E_j=\sum_{j=1}^3\delta_{ij}F^{0j}\; ,\\
B_j=\frac{1}{2}\sum_{k,l=1}^3\epsilon_{jkl}F^{kl}\; ;
\end{gather*}
where $\delta_{ij}$ is the Kronecker's delta and $\epsilon_{ijk}$ is the three-dimensional Levi-Civita symbol. From now on, we adopt the following notation: whenever an index is repeated twice, a summation over all possible values of such index is intended. Define $R^j_{kl}=-\delta_l^j\partial_k+\delta_k^j\partial_l$ and $(R^*)^{kl}_j=\delta_j^l\partial^k-\delta^k_j\partial^l$, and let $\Omega=(RR^*)^{1/2}$, then
\begin{equation}
  \label{eq:4}
  U(t)\equiv\left(
    \begin{array}{cc}
      \;\cos\Omega t\;&\;\frac{\sin\Omega t}{\Omega}R\;\\
      \; -R^*\frac{\sin\Omega t}{\Omega}\; &\; \cos(R^*R)^{1/2}t\;
    \end{array}
\right)\; ;
\end{equation}
also, define
\begin{equation}
  \label{eq:5}
  W(t)\equiv 1+t\left(\begin{array}{cc}
    \;0\;&\;1\;\\\;0\;&\;0\;
  \end{array}\right )\; .
\end{equation}
For the construction of $U(t)$ we have followed \citep{zbMATH03764412}. Then \eqref{eq:1} can be rewritten as an integral equation: set
\begin{equation}\label{eq:3}
  \vec{u}(t)=\left(
    \begin{array}{c}
      F_0(t)\\F(t)\\\xi(t)\\v(t)
    \end{array}
  \right)\; ,
\end{equation}
and let $\vec{u}(t_0)=\vec{u}_{(0)}$ (we define the electric part as $F_0=F^{0j}$, the magnetic part as $F=(F^{jk})_{j<k}$); also, let
\begin{gather*}
  j(t)=ev(t)\varphi(\xi(t)-x)\; ,\\
(f_{em}(t))_i=e\sum_{j=1}^3\delta_{ij}\bigl[\bigl(\varphi*F^{0j}(t)\bigr)(\xi(t))+\sum_{k=1}^3 v_k(t)\bigl(\varphi*F^{jk}(t)\bigr)(\xi(t))\bigr]\; .
\end{gather*}
Then we can write the integral equation:
\begin{equation}
  \label{eq:6}\tag{M-N.i}
  \vec{u}(t)=\left(
    \begin{array}{cc}
      U(t-t_0)&0\\0&W(t-t_0)
    \end{array}
\right)\vec{u}_{(0)}+\int_{t_0}^t\de\tau\left(
    \begin{array}{cc}
      U(t-\tau)&0\\0& W(t-\tau)
    \end{array}
\right) \left(
    \begin{array}{c}
      -j(\tau)\\0\\0\\f_{em}(\tau)
    \end{array}
\right)\; .
\end{equation}
The second couple of Maxwell's equations (namely $\nabla\cdot E=\rho$ and $\nabla\cdot B=0$) have to be dealt with separately.

We are interested in solutions belonging to the following spaces: let $\mathscr{X}_{s}$, for $-\infty<s<3/2$, to be
\begin{equation*}
  \mathscr{X}_{s}\equiv \bigl(\dot{H}^s(\mathds{R}^3) \bigr)^3\oplus \bigl(\dot{H}^s(\mathds{R}^3) \bigr)^3\oplus \mathds{R}^3\oplus\mathds{R}^3\; .
\end{equation*}
If $\vec{u}\in\mathscr{X}_{s}$ has the form \eqref{eq:3}, then $\mathscr{X}_{s}$ is a Hilbert space if equipped with the norm
\begin{equation*}
  \norm{\vec{u}}_{\mathscr{X}_{s}}^2=\norm{F_0}^2_{(\dot{H}^s)^3}+\norm{F}^2_{(\dot{H}^s)^3}+\ass{\xi}^2+\ass{v}^2\; ;
\end{equation*}
where
\begin{equation*}
  \norm{f}^2_{(\dot{H}^s)^3}=\sum_{j=1}^3\norm{\omega^s f_j}^2_{L^2}=\sum_{j=1}^3\int_{\mathds{R}^3}\ide{k}\ass{k}^{2s}\lvert\hat{f_j}(k)\rvert^2\; ,
\end{equation*}
with
\begin{equation*}
  \omega=\ass{\nabla}\; .
\end{equation*}
The homogeneous Sobolev spaces $\dot{H}^s(\mathds{R}^d)$ are Hilbert spaces for all $s<d/2$ \citep[see][]{MR2768550}. We will use as well the non-homogeneous Sobolev spaces $H^r(\mathds{R}^d)$, $r\geq 0$, complete with the norm
\begin{equation*}
  \norm{f}^2_{H^r}=\int_{\mathds{R}^d}\ide{k}(1+\ass{k}^2)^r\lvert\hat{f}(k)\rvert^2\; .
\end{equation*}
Let $I\subseteq \mathds{R}$; then we define
\begin{equation*}
  \mathscr{X}_{s}(I)=\mathscr{C}^0(I,\mathscr{X}_{s})\; .
\end{equation*}
If $I$ is compact, $\mathscr{X}_s(I)$ is complete with the norm
\begin{equation*}
  \norm{\vec{u}(\cdot)}_{\mathscr{X}_{s}(I)}=\sup_{t\in I}\norm{\vec{u}(t)}_{\mathscr{X}_{s}}\; .
\end{equation*}
Our goal will be to prove that a unique global solution of \eqref{eq:1} exists on $\mathscr{X}_{s}(\mathds{R})$ whenever the initial datum belongs to (a subspace of) $\mathscr{X}_{s}$ (theorem~\ref{sec:stat-main-results-1}); for all $s<3/2$ and suitably regular $\varphi$ (by the result for $s=0$ global existence on non-homogenous Sobolev spaces is also proved, theorem~\ref{sec:stat-main-results-2}).

Particles interacting with its electromagnetic field has been widely
studied in physics. The study of a radiating point particle revealed
the presence of divergencies. Therefore classically a radiating
particle is always assumed to have extended charge distribution. The
most used equations to describe such particle's (and corresponding
fields) motion are~\eqref{eq:1} above, or its semi-relativistic
counterpart, called Abraham model \citep{MR2097788} (also
Maxwell-Lorentz equations in \citep{bauerdurr2001}), see~\eqref{eq:14}
below. For a detailed discussion of their physical properties,
historical background and applications the reader can consult any
classical textbook on electromagnetism
\citep[e.g.][]{rohrlich2007classical}. On a mathematical standpoint,
almost all results deal with the semi-relativistic system, and are
quite recent. We mention an early work of \citet{bambusi.noja} on the
linearised problem; papers of \citet{Kiessling1999197,Appel200124} on
conservation laws and motion of a rotating extended charge. Concerning
global existence of solutions, refer to
\citet{doi:10.1080/03605300008821524} and \citet{bauerdurr2001}; the
latter result has been developed further in
\citet{MR3169754,MR3085905} to consider weighted $L^2$
spaces. \citet{mauser.imaikin,imaykin:042701} have investigated
soliton-type solutions and asymptotics. For a comprehensive review on
the classical and quantum dynamics of particles and their radiation
fields the reader may refer to the book by \citet{MR2097788}.

The existence results \citep{doi:10.1080/03605300008821524,bauerdurr2001,MR3169754} are formulated for the semi-relativistic system~\eqref{eq:14}, but they should apply also to \eqref{eq:1}: existence of a differentiable solution holds on suitable subspaces of $(L^2_w(\mathds{R}^3))^3\oplus (L^2_w(\mathds{R}^3))^3\oplus \mathds{R}^3\oplus \mathds{R}^3$, with $w$ denoting an eventual weight on $L^2$ spaces; in this paper a continuous global solution is proved to exist on a wider class of spaces: $\bigl(\dot{H}^s(\mathds{R}^3) \bigr)^3\oplus \bigl(\dot{H}^s(\mathds{R}^3) \bigr)^3\oplus \mathds{R}^3\oplus\mathds{R}^3$, $s<3/2$ and $\bigl(H^r(\mathds{R}^3) \bigr)^3\oplus \bigl(H^r(\mathds{R}^3) \bigr)^3\oplus \mathds{R}^3\oplus\mathds{R}^3$, $r\geq 0$.

From a physical standpoint, it is a natural choice to consider the
energy space $\mathscr{X}_0(\mathds{R})$ to solve \eqref{eq:1}. Also,
it is not necessary, in principle, to introduce new objects like the
electromagnetic potentials $\phi $ and $A$. Nevertheless, there are
plenty of situations where such potentials are either convenient or
necessary: a simple example is in defining a Lagrangian or Hamiltonian
function of the charge-electromagnetic field system. Once a gauge is
fixed, investigating the regularity of the potentials $\phi $ and $A$
is equivalent to provide a solution to \eqref{eq:1} on a suitable
space, often different form $\mathscr{X}_0(\mathds{R})$. In this
context, homogeneous Sobolev spaces emerge. For example, consider the
vector potential $A$ in the Coulomb gauge. Then, given $F^{ij}$, we
have $A_{j}=\omega ^{-2}\sum_{i=1}^{3}\partial _i F^{ij}$. So the
requirement $A\in \bigl(L^2 (\mathds{R}^3 )\bigr)^3$ is equivalent to
$B\in \bigl(\dot{H}^{-1}(\mathds{R}^3)\bigr)^{3}$. Another natural
choice, if one wants to investigate the connection between the quantum
and the classical theory, is to have $A\in
\bigl(\dot{H}^{1/2}(\mathds{R}^3)\bigr)^{3}$ and $\partial _t A\in
\bigl(\dot{H}^{-1/2}(\mathds{R}^3)\bigr)^{3}$ (because, roughly
speaking, the classical correspondents of quantum
creation/annihilation operators behave as $\omega ^{1/2}A$, $\omega
^{-1/2}\partial _t A$ and are required to be square integrable). This
is equivalent to solve \eqref{eq:1} with $E\in
\bigl(\dot{H}^{-1/2}(\mathds{R}^3)\bigr)^{3}$ and $B\in
\bigl(\dot{H}^{-1/2}(\mathds{R}^3)\bigr)^{3}$. This led us to consider
the existence of global solutions of \eqref{eq:1} on homogeneous
Sobolev spaces, especially the ones with negative index $s<0$.
\begin{remark}
  In formulating the equations, we have restricted to one charge for
  the sake of simplicity; results analogous to those stated in
  Theorems~\ref{sec:stat-main-results-1} and
  \ref{sec:stat-main-results-2} should hold also in the case of $n$
  charges, even if they are subjected to mutual and external
  interactions, provided these interactions are regular enough.
\end{remark}

The rest of the paper is organised as follows: in section~\ref{sec:stat-main-results} we summarise and discuss the results proved in this paper; in section~\ref{sec:priori-cons} a local solution is constructed by means of Banach fixed point theorem; in section~\ref{sec:uniq-maxim-solut} we prove uniqueness and construct the maximal solution; in section~\ref{sec:global-existence} we show that the maximal solution is defined for all $t\in\mathds{R}$; finally in section~\ref{sec:second-couple-maxw} we discuss the divergence part of Maxwell's equations.

\section{Statement of main results.}
\label{sec:stat-main-results}

In this section we summarise the results proved in the paper. This is done in Theorems~\ref{sec:stat-main-results-1} and~\ref{sec:stat-main-results-2}. We recall the Cauchy problem related to the Maxwell-Newton system of equations:
\begin{equation}
  \label{eq:11}\begin{aligned}
      \left\{\begin{aligned}
       \partial_t &B + \nabla\times E=0\\
       \partial_t &E - \nabla\times B=-j
      \end{aligned}\right. \mspace{20mu}&\left\{\begin{aligned}
        \dot{\xi}&= v\\
        \dot{v}&= e[(\varphi*E)(\xi)+v\times(\varphi*B)(\xi)]
      \end{aligned}\right.\\
      \left\{\begin{aligned}
       E(t_0)&=E_{(0)}\\
       B(t_0)&=B_{(0)}
      \end{aligned}\right. \mspace{69mu}&\left\{\begin{aligned}
        \xi(t_0)&= \xi_{(0)}\\
        v(t_0)&=v_{(0)}
      \end{aligned}\right.
\end{aligned}\quad ;
\end{equation}
\begin{equation}
  \label{eq:12}
\left\{\begin{aligned}
        \nabla\cdot &E=\rho\\
        \nabla\cdot &B=0
      \end{aligned}\right. \; ;
\end{equation}
with $j=ev\varphi(\xi-x)$ and $\rho=e\varphi(\xi-x)$. With an abuse of terminology, we will refer to the solutions of the Cauchy problem~\eqref{eq:11}, \eqref{eq:12} simply as the solutions of Maxwell-Newton system.
\begin{thm}\label{sec:stat-main-results-1}
Let $-\infty<s<3/2$, $\varphi$ a differentiable function of $\mathds{R}^3$ such that $\norm{\varphi}_Y<\infty$ (~$Y$ is $\dot{H}^{-s}$, $\dot{H}^{-s+1}$, $\dot{H}^{s}$, $\dot{H}^{s+1}$) and such that~\eqref{eq:12} admits a solution on $(\dot{H}^s(\mathds{R}^3))^3\oplus (\dot{H}^s(\mathds{R}^3))^3$. Furthermore let $\xi_{(0)},v_{(0)}\in\mathds{R}^3$ and $E_{(0)}, B_{(0)}\in (\dot{H}^s(\mathds{R}^3))^3$ satisfying~\eqref{eq:12}.

Then the Maxwell-Newton system~\eqref{eq:1} admits a unique solution on

$\mathscr{C}^0(\mathds{R},(\dot{H}^s(\mathds{R}^3))^3\oplus (\dot{H}^s(\mathds{R}^3))^3\oplus\mathds{R}^3\oplus\mathds{R}^3)$.
\end{thm}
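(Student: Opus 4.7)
My plan is to treat the integral formulation~\eqref{eq:6} as a fixed point equation in $\mathscr{X}_s(I)$ for a suitably small compact interval $I\ni t_0$, and then extend to $\mathds{R}$ by a continuation argument based on an a priori control of the norm. The divergence constraints~\eqref{eq:12} can be handled last, since they are propagated by the dynamics.

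\textbf{Step 1: Boundedness of the linear propagator.} The block-diagonal evolution appearing in~\eqref{eq:6} splits as $U(t)\oplus W(t)$. Because $U(t)$ is unitary in $\ell^2$-sense at the symbolic (Fourier) level, its norm on $(\dot H^s)^3\oplus(\dot H^s)^3$ is uniformly bounded in $t$; $W(t)$ is a $2\times 2$ matrix acting on $\mathds{R}^3\oplus\mathds{R}^3$ with $\|W(t)\|\leq 1+|t|$. So if $|I|\leq 1$ both factors act with a universal bound.

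\textbf{Step 2: Nonlinear estimates and local existence.} For the source terms, $\|j(\tau)\|_{(\dot H^s)^3}=|e|\,|v(\tau)|\,\|\varphi(\cdot-\xi(\tau))\|_{\dot H^s}=|e|\,|v(\tau)|\,\|\varphi\|_{\dot H^s}$ by translation invariance of homogeneous Sobolev norms. For the Lorentz force I would use Plancherel plus Cauchy--Schwarz: $|(\varphi*F^{\mu\nu})(\xi)|\leq \|\varphi\|_{\dot H^{-s}}\|F^{\mu\nu}\|_{\dot H^s}$, hence $|f_{em}(\tau)|\lesssim |e|(1+|v(\tau)|)\|\varphi\|_{\dot H^{-s}}(\|F_0(\tau)\|_{(\dot H^s)^3}+\|F(\tau)\|_{(\dot H^s)^3})$. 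Combining these estimates with Step~1 and inserting them into the RHS of~\eqref{eq:6} gives a map $T:\mathscr{X}_s(I)\to\mathscr{X}_s(I)$ whose Lipschitz constant on a ball of radius $2\|\vec{u}_{(0)}\|_{\mathscr{X}_s}$ can be made $<1$ provided $|I|$ is chosen small enough in terms of $|e|$, $\|\vec{u}_{(0)}\|_{\mathscr{X}_s}$ and the four quantities $\|\varphi\|_{\dot H^{\pm s},\dot H^{\pm s+1}}$; the extra $\dot H^{\pm s+1}$ norms enter when estimating differences $\varphi(\cdot-\xi_1)-\varphi(\cdot-\xi_2)$ by the mean value theorem in Fourier space. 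Banach's contraction principle then yields a unique local solution.

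\textbf{Step 3: Uniqueness, maximal solution, and global existence.} Uniqueness on any common interval, and the construction of a maximal solution defined on some open interval $(t_-,t_+)\ni t_0$, are standard consequences of the Lipschitz estimates above (this is the content of Section~\ref{sec:uniq-maxim-solut}). The crucial step is to exclude finite-time blow-up. From $\dot v\cdot v=ev\cdot(\varphi*F_0)(\xi)$ (the magnetic force is orthogonal to $v$) and the estimate for $\varphi\ast F_0$, I obtain $\tfrac{d}{dt}|v|^2\lesssim |e|\,|v|\,\|\varphi\|_{\dot H^{-s}}\|F_0\|_{(\dot H^s)^3}$. Inserting the integral formula for $F_0$ gives a closed Gr\"onwall-type inequality for the scalar quantity $|v(t)|+\|F_0(t)\|_{(\dot H^s)^3}+\|F(t)\|_{(\dot H^s)^3}$, whose solution is locally integrable on $\mathds{R}$; since $\xi$ is controlled by $\int|v|$, this prevents blow-up of $\|\vec{u}(t)\|_{\mathscr{X}_s}$ in finite time and forces $(t_-,t_+)=\mathds{R}$. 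I expect this a priori bound to be the principal obstacle of the argument for general $s\neq 0$: for $s=0$ one can invoke the standard conserved energy, but outside the energy space only the structural cancellation $v\cdot(v\times\cdot)=0$ is available, so the Gr\"onwall coupling between $|v|$ and the field norms must be closed carefully.

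\textbf{Step 4: Divergence part.} Finally, taking $\nabla\cdot$ of the two evolution equations in~\eqref{eq:11} and using $\nabla\cdot(\nabla\times\cdot)=0$ together with the continuity equation~\eqref{eq:2}, one sees that $\nabla\cdot E-\rho$ and $\nabla\cdot B$ satisfy homogeneous transport equations in $t$, so that if they vanish at $t_0$ they vanish for all~$t$; hence~\eqref{eq:12} is automatically preserved by the solution built above, and this concludes the proof.
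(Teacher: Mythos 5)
Your proposal is correct and follows essentially the same route as the paper: a Banach fixed-point argument on $\mathscr{X}_s(I)$ using the unitarity of $U(t)$ on $(\dot H^s)^3\oplus(\dot H^s)^3$, the convolution bound $\norm{\varphi*F}_{L^\infty}\leq\norm{\varphi}_{\dot H^{-s}}\norm{F}_{\dot H^s}$ and the mean value theorem on $\varphi$ (which is where the $\dot H^{\pm s+1}$ norms enter), then uniqueness, the blow-up alternative for the maximal solution, a Gr\"onwall-type a priori bound exploiting $v\cdot(v\times(\varphi*B))=0$ to rule out finite-time blow-up, and finally propagation of the divergence constraints via $\nabla\cdot(\nabla\times\cdot)=0$ and charge conservation. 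The only (immaterial) variation is in Step 3, where you close the a priori estimate through the Duhamel formula and an integral Gr\"onwall inequality for $\ass{v}+\norm{\vec{F}}_{(\dot H^s)^6}$, whereas the paper differentiates the quadratic energy-type functional $\frac{1}{2}\bigl(v^2+\norm{\omega^sE}^2_{L^2}+\norm{\omega^sB}^2_{L^2}\bigr)$ in the interaction representation.
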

\begin{corollary}
  Let the conditions of theorem~\ref{sec:stat-main-results-1} be satisfied. In addition, let $\norm{\varphi}_{\dot{H}^{s-1}}<\infty$, $E_{(0)}, B_{(0)}\in (\dot{H}^{s-1}(\mathds{R}^3))^3$.

Then~\eqref{eq:1} admits a unique solution on $\mathscr{C}^0(\mathds{R},(\dot{H}^s(\mathds{R}^3))^3\oplus (\dot{H}^s(\mathds{R}^3))^3\oplus\mathds{R}^3\oplus\mathds{R}^3)$ $\cap$ $\mathscr{C}^1(\mathds{R},(\dot{H}^{s-1}$

$(\mathds{R}^3))^3\oplus (\dot{H}^{s-1}(\mathds{R}^3))^3\oplus\mathds{R}^3\oplus\mathds{R}^3)$.
\end{corollary}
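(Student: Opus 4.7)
The plan is to invoke Theorem \ref{sec:stat-main-results-1} for existence and uniqueness in $\mathscr{X}_s$, then to bootstrap regularity to obtain continuity into $\mathscr{X}_{s-1}$, and finally to differentiate the integral equation \eqref{eq:6} by exploiting the fact that the generator of the Maxwell group $U(t)$ is a first-order matrix operator that loses exactly one derivative.

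First, Theorem \ref{sec:stat-main-results-1} yields a unique $\vec{u} \in \mathscr{C}^0(\mathds{R}, \mathscr{X}_s)$ solving \eqref{eq:6}; uniqueness in the intersection space is then automatic. For continuity into $\mathscr{X}_{s-1}$, direct inspection of the Fourier symbols in \eqref{eq:4} (in particular $\cos(\ass{k}t)$ and the entries $\sin(\ass{k}t)/\ass{k}\cdot k_{j}$, all uniformly bounded in $t$) shows that $U(t)$ extends to a strongly continuous, uniformly bounded one-parameter group on $(\dot{H}^{s-1})^3 \oplus (\dot{H}^{s-1})^3$. The added assumption $E_{(0)}, B_{(0)} \in (\dot{H}^{s-1})^3$ then makes the free evolution continuous into this space, while by translation invariance
\[
  \norm{j(\tau)}_{\dot{H}^{s-1}} = \ass{e}\,\ass{v(\tau)}\,\norm{\varphi}_{\dot{H}^{s-1}},
\]
which is finite by the new hypothesis on $\varphi$ and continuous in $\tau$ by continuity of $v$. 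Hence the Duhamel integral is continuous into $(\dot{H}^{s-1})^3 \oplus (\dot{H}^{s-1})^3$, and altogether $\vec{u} \in \mathscr{C}^0(\mathds{R}, \mathscr{X}_{s-1})$.

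For the $\mathscr{C}^1$ regularity, let $\mathcal{A}$ denote the generator of $U(t)$ realised on $(\dot{H}^{s-1})^3 \oplus (\dot{H}^{s-1})^3$: it is the first-order matrix operator $\bigl(\begin{smallmatrix}0 & R \\ -R^{*} & 0\end{smallmatrix}\bigr)$, whose domain contains $(\dot{H}^{s})^3 \oplus (\dot{H}^{s})^3$. Since Theorem \ref{sec:stat-main-results-1} already forces $(E_{(0)},B_{(0)})$ to belong to that space, $t \mapsto U(t-t_0)(E_{(0)},B_{(0)})$ is classically differentiable with derivative $\mathcal{A}\,U(t-t_0)(E_{(0)},B_{(0)})$ continuous into $(\dot{H}^{s-1})^3 \oplus (\dot{H}^{s-1})^3$. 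The Duhamel contribution is differentiated by the usual Leibniz rule
\[
  \partial_t \int_{t_0}^t U(t-\tau)\bigl(-j(\tau),0\bigr)\, d\tau = \bigl(-j(t),0\bigr) + \int_{t_0}^t \mathcal{A}\,U(t-\tau)\bigl(-j(\tau),0\bigr)\, d\tau,
\]
legitimate because $j(\tau) \in (\dot{H}^s)^3$ thanks to the assumption $\varphi \in \dot{H}^s$ from Theorem \ref{sec:stat-main-results-1} and depends continuously on $\tau$. Summing, one recovers $\partial_t E = \nabla\times B - j$ and $\partial_t B = -\nabla\times E$ strongly in $(\dot{H}^{s-1})^3$, whose right-hand sides are manifestly continuous in $t$. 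For the particle block, the lower part of \eqref{eq:6} gives $\dot{\xi}=v$ and $\dot{v} = f_{em}$, each continuous into $\mathds{R}^3$ by the continuity of $v$, $\xi$ and the same estimate on $f_{em}$ used in the proof of Theorem \ref{sec:stat-main-results-1}.

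The only subtle point is the rigorous justification of the Leibniz formula for the mild integral as a strong identity in homogeneous Sobolev norms; I expect this to follow by a standard density argument, approximating $\varphi$ by Schwartz functions inside $\dot{H}^{s-1}\cap\dot{H}^s$ and passing to the limit using the strong continuity of $U(t)$ on $\mathscr{X}_{s-1}$ together with the commutation $\mathcal{A}\,U(t) = U(t)\,\mathcal{A}$ on $\mathscr{X}_s$. Everything else amounts to a translation of standard $C_0$-semigroup bookkeeping into the Fourier-multiplier representation \eqref{eq:4}.
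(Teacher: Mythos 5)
The paper states this corollary without an explicit proof, so there is nothing to compare line by line; your argument is correct and is essentially the one the paper's own machinery points to, since Section~\ref{sec:global-existence} already records the differentiability of the interaction-representation field $\widetilde{\vec{F}}$ in~\eqref{eq:8} and of $v$ in~\eqref{eq:9}, and undoing the conjugation by $U(t-t_0)$ costs exactly the one derivative that the extra hypotheses $\norm{\varphi}_{\dot{H}^{s-1}}<\infty$ and $E_{(0)},B_{(0)}\in(\dot{H}^{s-1}(\mathds{R}^3))^3$ pay for. Your Duhamel differentiation on $(\dot{H}^{s-1})^3\oplus(\dot{H}^{s-1})^3$ is the standard rigorous version of that step, and the vector-valued continuity of $\tau\mapsto j(\tau)$ into $\dot{H}^{s-1}$ and into the generator's domain that it requires follows from the mean-value-theorem estimate already used in the proof of Lemma~\ref{sec:contraction-mapping-1}.
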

\begin{thm}\label{sec:stat-main-results-2}
  Let $r\geq 0$, $\varphi$ a differentiable function of $\mathds{R}^3$ such that $\norm{\varphi}_{H^{r}}$, $\norm{\varphi}_{H^1}<\infty$ and such that~\eqref{eq:12} admits a solution on $(H^r(\mathds{R}^3))^3\oplus (H^r(\mathds{R}^3))^3$. Furthermore let $\xi_{(0)},v_{(0)}\in\mathds{R}^3$ and $E_{(0)}, B_{(0)}\in (H^r(\mathds{R}^3))^3$ satisfying~\eqref{eq:12}.

Then~\eqref{eq:1} admits a unique solution on $\mathscr{C}^0(\mathds{R},(H^r(\mathds{R}^3))^3\oplus (H^r(\mathds{R}^3))^3\oplus\mathds{R}^3\oplus\mathds{R}^3)\cap \mathscr{C}^1(\mathds{R},(H^{r-1}$

$(\mathds{R}^3))^3\oplus (H^{r-1}(\mathds{R}^3))^3\oplus\mathds{R}^3\oplus\mathds{R}^3)$.
\end{thm}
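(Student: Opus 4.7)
The plan is to bootstrap from Theorem~\ref{sec:stat-main-results-1} applied at $s=0$. At this value of $s$ the required conditions on $\varphi$ reduce to $\norm{\varphi}_{L^{2}}$ and $\norm{\varphi}_{\dot{H}^{1}}$ being finite, both controlled by $\norm{\varphi}_{H^{1}}<\infty$, and the divergence condition~\eqref{eq:12} on $(H^r)^3\oplus(H^r)^3$ is a fortiori a divergence condition on $(L^2)^3\oplus(L^2)^3$ for $r\geq 0$. Theorem~\ref{sec:stat-main-results-1} therefore produces a unique global solution $\vec{u}\in\mathscr{X}_{0}(\mathds{R})$; in particular the trajectory $(\xi,v)\in\mathscr{C}^0(\mathds{R},\mathds{R}^3\times\mathds{R}^3)$ is bounded on every compact subinterval.

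Next I would upgrade the regularity of the field component. The propagator $U(t)$ in~\eqref{eq:4} is a $2\times 2$ matrix of Fourier multipliers whose symbols are built from $\cos(\ass{k}t)$, $\sin(\ass{k}t)/\ass{k}$, and degree-zero factors involving $R,R^{*},\Omega^{-1}$; these are bounded uniformly in $k$ and locally uniformly in $t$, and commute with $\omega^r$. Hence $U(t)$ extends boundedly to $(\dot{H}^r)^3\oplus(\dot{H}^r)^3$ for every $r\in\mathds{R}$, with locally bounded operator norm. Taking the $\dot{H}^r$ norm of the Duhamel identity~\eqref{eq:6}, together with translation invariance $\norm{j(\tau)}_{(\dot{H}^r)^3}=\ass{e}\,\ass{v(\tau)}\,\norm{\varphi}_{\dot{H}^r}\leq \ass{e}\,\ass{v(\tau)}\,\norm{\varphi}_{H^r}$, yields, on any compact interval $[t_0,T]$,
\begin{equation*}
\norm{(F_0(t),F(t))}_{(\dot{H}^r)^3\oplus(\dot{H}^r)^3}\leq C(T)\Bigl[\norm{(F_{0,(0)},F_{(0)})}_{(\dot{H}^r)^3\oplus(\dot{H}^r)^3}+\ass{e}\,\norm{\varphi}_{H^r}\int_{t_0}^{t}\ass{v(\tau)}\,\de\tau\Bigr]\; ,
\end{equation*}
which is finite because $v$ is continuous on $\mathds{R}$. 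Combining with the $L^2$ bound from the first step and the equivalence $\norm{\cdot}_{H^r}^{2}\asymp\norm{\cdot}_{L^{2}}^{2}+\norm{\cdot}_{\dot{H}^r}^{2}$ valid for $r\geq 0$ gives $(F_0,F)\in\mathscr{C}^0(\mathds{R},(H^r)^3\oplus(H^r)^3)$, the continuity in $t$ following from strong continuity of $U(\cdot)$ on $\dot{H}^r$ and dominated convergence inside the Duhamel integral.

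Uniqueness is inherited from the $s=0$ case of Theorem~\ref{sec:stat-main-results-1} because any $H^r$ solution is \emph{a fortiori} an $L^2$ solution. The $\mathscr{C}^1$ statement follows by differentiating the evolution equations: $\partial_t E=\nabla\times B-j\in\mathscr{C}^0(\mathds{R},(H^{r-1})^3)$, since $\nabla\times\colon H^r\to H^{r-1}$ is bounded and $j\in\mathscr{C}^0(\mathds{R},(H^r)^3)$; similarly for $\partial_tB$, while $\dot\xi=v$ and $\dot v=f_{em}$ are continuous as $\mathds{R}^3$-valued maps.

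The essential point that I expect to require the most care is that the hypothesis $\norm{\varphi}_{H^r}<\infty$ does \emph{not} control any negative-index homogeneous norm of $\varphi$, so Theorem~\ref{sec:stat-main-results-1} cannot be invoked at $s=r$ (even when $r<3/2$); the bootstrap from $s=0$ is the only route available, and succeeds precisely because $\norm{\varphi}_{\dot{H}^r}\leq\norm{\varphi}_{H^r}$ for $r\geq 0$. The remaining technical work is the uniform-in-$k$ symbolic check that $U(t)$ acts boundedly on $(\dot{H}^r)^3\oplus(\dot{H}^r)^3$ with locally bounded operator norm.
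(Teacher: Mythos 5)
Your proof is correct and follows the same skeleton as the paper's: both obtain the unique global solution at $s=0$ from Theorem~\ref{sec:stat-main-results-1} (checking, as you do, that $\norm{\varphi}_{H^1}<\infty$ supplies the four homogeneous norms needed there and that $H^r$ data are a fortiori $L^2$ data satisfying~\eqref{eq:12}) and then propagate $H^r$ regularity along the already-constructed flow. The one substantive difference is the mechanism of that propagation: the paper invokes Remark~\ref{sec:global-existence-2}, i.e.\ the $(1-\Delta)^{r/2}$ analogue of the Gronwall-type energy estimate of Lemma~\ref{sec:global-existence-1}, which bounds the $\mathscr{Y}_r$ norm of the solution exponentially in $\ass{t-t_0}$ using only $\norm{\varphi}_{H^r}$; you instead feed the global continuity of $v$ (already known from the $s=0$ solution) into the Duhamel formula~\eqref{eq:6}, using translation invariance of $\norm{\varphi(\xi-\cdot)}_{\dot{H}^{r}}$ and boundedness of $U(t)$ on $(\dot{H}^{r})^3\oplus(\dot{H}^{r})^3$, and reassemble $H^r$ from $L^2\cap\dot{H}^{r}$ at the end. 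Both are valid; your linear-in-the-source estimate is slightly more elementary, while the paper's Gronwall bound reuses verbatim the machinery it already needs for global existence and works directly with the non-homogeneous norm, sidestepping the (harmless, since you intersect with $L^2$, but worth a word) fact that $\dot{H}^{r}$ for $r\geq 3/2$ is only a seminormed space rather than a Hilbert space of tempered distributions. You also spell out the $\mathscr{C}^1(\mathds{R},(H^{r-1}(\mathds{R}^3))^3\oplus(H^{r-1}(\mathds{R}^3))^3\oplus\mathds{R}^3\oplus\mathds{R}^3)$ claim via the equation itself, which the paper's two-line proof leaves implicit.
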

\begin{proof}[Proof of Theorem~\ref{sec:stat-main-results-2}]
  Using Theorem~\ref{sec:stat-main-results-1} we prove existence of the unique solution of~\eqref{eq:1} on $\mathscr{X}_0(\mathds{R})$. Then by Remark~\ref{sec:global-existence-2} it follows that if initial data are in $(H^r(\mathds{R}^3))^3\oplus (H^r(\mathds{R}^3))^3\oplus\mathds{R}^3\oplus\mathds{R}^3$ then also the solution is in the same space for all $t\in\mathds{R}$.
\end{proof}
\begin{remark}\label{sec:stat-main-results-3}
  The methods and results of this paper should also apply to the semi-relativistic version of Maxwell-Newton system, called Abraham model:
  \begin{equation}\label{eq:14}
    \begin{aligned}
      &\left\{\begin{aligned}
       \partial_t &B + \nabla\times E=0\\
       \partial_t &E - \nabla\times B=-j
      \end{aligned}\right. \mspace{20mu}
      \left\{\begin{aligned}
        \nabla\cdot &E=\rho\\
        \nabla\cdot &B=0
      \end{aligned}\right.\\
      &\left\{\begin{aligned}
        \dot{\xi}&= \frac{p}{\sqrt{1+p^2}}\\
        \dot{p}&= e[(\varphi*E)(\xi)+\frac{p}{\sqrt{1+p^2}}\times(\varphi*B)(\xi)]
      \end{aligned}\right.
    \end{aligned}    \quad ;
  \end{equation}
with $j=ep\varphi(\xi-x)/\sqrt{1+p^2}$ and $\rho=e\varphi(\xi-x)$.

In \citet{bauerdurr2001} an approach different to the one of this paper is taken; global existence for~\eqref{eq:14} is proved on a particular class of $(L^2(\mathds{R}^3))^3\oplus (L^2(\mathds{R}^3))^3\oplus \mathds{R}^3\oplus \mathds{R}^3$-subspaces, namely $D(B^n)$, $n\geq 1$ where $B\vec{u}=(\nabla\times B,-\nabla\times E,0,0)$.
\end{remark}

\subsection{Rotating charge.}
\label{sec:rotating-charge}

Let $m,I>0$, and $\Omega(\cdot):\mathds{R}\to\mathds{R}^3$. Then global solution of the following Cauchy problem can be proved along the same guidelines as for theorems~\ref{sec:stat-main-results-1},~\ref{sec:stat-main-results-2}:
\begin{equation}
  \label{eq:15}
  \begin{aligned}
      &\left\{\begin{aligned}
       \partial_t &B + \nabla\times E=0\\
       \partial_t &E - \nabla\times B=-j
      \end{aligned}\right.\\ 
      &\left\{\begin{aligned}
        \dot{\xi}&= v\\
        \dot{v}&= \frac{e}{m}\int\ide{x}[E(x)+(v+\Omega\times(x-\xi))\times B(x)]\varphi(\xi-x)\\
        \dot{\Omega}&=\frac{1}{I}\int\ide{x}(x-\xi)\times[E(x)+(v+\Omega\times(x-\xi))\times B(x)]\varphi(\xi-x)
      \end{aligned}\right.\\
&\left\{\begin{aligned}
       E(t_0)&=E_{(0)}\\
       B(t_0)&=B_{(0)}
      \end{aligned}\right.
\mspace{20mu}\left\{\begin{aligned}
        \xi(t_0)&= \xi_{(0)}\\
        v(t_0)&=v_{(0)}\\
        \Omega(t_0)&=\Omega_{(0)}\end{aligned}\right.
      \end{aligned}\quad .
\end{equation}
$E$ and $B$ satisfy as above constraints~\eqref{eq:12}, $j=e(v+\Omega\times (x-\xi))\varphi(\xi-x)$ and $\rho=e\varphi(\xi-x)$. This system of equations could be used to describe the motion of a rigid rotating charge distribution of mass $m$ and moment of inertia $I$, with angular velocity $\Omega$, coupled with its electromagnetic field \citep[see][]{Appel200124,imakomspo2004}.

Define the following conditions:
\begin{innercondition}
\label{sec:rotating-charge-1}
$\varphi$ is a differentiable function of $\mathds{R}^3$ such that $\forall i,i'=1,2,3$, $\norm{\varphi(x)}_Y$, $\norm{x_i\varphi(x)}_Y$, $\norm{x_ix_{i'}\varphi(x)}_Y <\infty$ when $Y=\dot{H}^{-s},\dot{H}^{-s+1},\dot{H}^s,\dot{H}^{s+1}$.
\end{innercondition}
\begin{innercondition}
 \label{sec:rotating-charge-2}
$\varphi$ is a differentiable function of $\mathds{R}^3$ such that $\forall i,i'=1,2,3$, $\norm{\varphi(x)}_Y$, $\norm{x_i\varphi(x)}_Y$, $\norm{x_ix_{i'}\varphi(x)}_Y <\infty$ when $Y=H^{r},H^{1}$.
\end{innercondition}
Then theorems~\ref{sec:stat-main-results-1} and~\ref{sec:stat-main-results-2} above can be reformulated for the system~\eqref{eq:15} as:
\begin{thm1*}
Let $-\infty<s<3/2$, $\varphi$ satisfying Condition~\ref{sec:rotating-charge-1} and such that~\eqref{eq:12} admits a solution on $(\dot{H}^s(\mathds{R}^3))^3\oplus (\dot{H}^s(\mathds{R}^3))^3$. Furthermore let $\xi_{(0)},v_{(0)},\Omega_{(0)}\in\mathds{R}^3$ and $E_{(0)}, B_{(0)}\in (\dot{H}^s(\mathds{R}^3))^3$ satisfying~\eqref{eq:12}. Then Cauchy problem~\eqref{eq:15} admits a unique solution on $\mathscr{C}^0(\mathds{R},(\dot{H}^s(\mathds{R}^3))^3\oplus (\dot{H}^s(\mathds{R}^3))^3\oplus\mathds{R}^3\oplus\mathds{R}^3\oplus\mathds{R}^3)$.
\end{thm1*}
\begin{thm2*}
Let $r\geq 0$, $\varphi$ satisfying Condition~\ref{sec:rotating-charge-2} and such that~\eqref{eq:12} admits a solution on $(H^r(\mathds{R}^3))^3\oplus (H^r(\mathds{R}^3))^3$. Furthermore let $\xi_{(0)},v_{(0)},\Omega_{(0)}\in\mathds{R}^3$ and $E_{(0)}, B_{(0)}\in (H^r(\mathds{R}^3))^3$ satisfying~\eqref{eq:12}. Then~\eqref{eq:15} admits a unique solution on $\mathscr{C}^0(\mathds{R},(H^r(\mathds{R}^3))^3\oplus (H^r(\mathds{R}^3))^3\oplus\mathds{R}^3\oplus\mathds{R}^3\oplus\mathds{R}^3) \cap \mathscr{C}^1(\mathds{R},$
$(H^{r-1}(\mathds{R}^3))^3\oplus (H^{r-1}(\mathds{R}^3))^3\oplus\mathds{R}^3\oplus\mathds{R}^3)$.
\end{thm2*}

\section{Local Solution.}
\label{sec:priori-cons}

In this section we construct a local solution of~\eqref{eq:6} on $\mathscr{X}_{s}(I)$, for a suitable interval $I\subset \mathds{R}$ containing $t_0$; this is done in Proposition~\ref{sec:local-solution}. We start our analysis summarising the properties of $U(t)$ and $W(t)$, defined respectively in~\eqref{eq:4} and~\eqref{eq:5}, we will use the most. This is done in the following Proposition:
\begin{proposition}\label{sec:local-solution-1}
$U(t)$ satisfies the following properties:
\begin{enumerate}[i.]
\item $U(t)$ commutes with $\omega^s$ for all $s$ and $t\in\mathds{R}$, on suitable domains.
\item $U(t)$, $t\in\mathds{R}$, is a unitary one-parameter group on $(\dot{H}^s(\mathds{R}^3))^3\oplus (\dot{H}^s(\mathds{R}^3))^3$, for $s<3/2$.
\end{enumerate}
$W(t)$ satisfies the following properties:
\begin{enumerate}[i.]
\item $W(t)$ is differentiable in $t$, and
  \begin{equation*}
    \dot{W}(t)=\left(
      \begin{array}{cc}
        0&1\\0&0
      \end{array}
\right)\; .
  \end{equation*}
\item $W(s)W(t)=W(s+t)$ for all $s,t\in\mathds{R}$.
\end{enumerate}
\end{proposition}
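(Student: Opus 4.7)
The plan is to move everything to Fourier space, where $R$, $R^{*}$, $\Omega$, and $\omega$ all become matrix/scalar multipliers depending only on $k$, and then verify the stated properties by linear algebra that is uniform in $k$. Since $W(t)$ is literally a $2\times 2$ numerical matrix acting trivially on the spatial variables (it multiplies $(\xi,v)\in\mathds{R}^3\oplus\mathds{R}^3$), I treat it separately at the end by direct calculation.

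For $U(t)$, I would first note that $R$ and $R^{*}$ are Fourier multipliers whose symbols $\hat R(k)$, $\hat R^{*}(k)$ are linear in $k$; hence $\Omega = (RR^{*})^{1/2}$ is a Fourier multiplier whose symbol $\hat\Omega(k)$ is the square root of the positive semi-definite matrix $\hat R(k)\hat R^{*}(k)$, and $\hat\Omega(k)$ is pointwise bounded by $c|k|$. Writing $\hat U(k,t)$ for the $6\times 6$ symbol of $U(t)$, each block is a bounded function of $k$ (the off-diagonal blocks are of the form $\sin\hat\Omega(k)t\cdot\hat\Omega(k)^{-1}\hat R(k)$, which is uniformly bounded in $k$ because $\hat R\hat R^{*} = \hat\Omega^{2}$ and so $\|\hat\Omega^{-1}\hat R(k)\|\leq 1$). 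Property (i) is then immediate: multiplication by $|k|^{s}$ commutes with multiplication by $\hat U(k,t)$, which translates back to $[\omega^{s},U(t)]=0$ on suitable domains.

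For property (ii), I split the work into three steps. (a) Well-definedness and boundedness on $(\dot H^{s})^{3}\oplus (\dot H^{s})^{3}$ for $s<3/2$: since $\hat U(k,t)$ is bounded uniformly in $k$, $U(t)$ acts on the weighted $L^{2}$-space with weight $|k|^{2s}$; the restriction $s<3/2$ is needed exactly to have $\dot H^{s}(\mathds{R}^{3})$ as a genuine Hilbert space. (b) Unitarity: check that $\hat U(k,t)^{*}\hat U(k,t) = \mathds{1}$ pointwise in $k$; this reduces to the two block identities $\cos^{2}\hat\Omega t + \hat R\hat\Omega^{-2}\sin^{2}\hat\Omega t\,\hat R^{*} = \mathds{1}$ on the electric block and $\cos^{2}(\hat R^{*}\hat R)^{1/2}t + \hat R^{*}\hat\Omega^{-2}\sin^{2}\hat\Omega t\,\hat R = \mathds{1}$ on the magnetic block, together with cancellation of the cross terms. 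Both reduce, via the intertwining relation $\hat R\,f(\hat R^{*}\hat R) = f(\hat R\hat R^{*})\,\hat R$ valid for any real analytic $f$, to $\sin^{2}+\cos^{2}=1$. (c) Group property $U(s+t)=U(s)U(t)$: expand the $2\times 2$ block product of $\hat U(k,s)\hat U(k,t)$, use $\hat R\hat R^{*}=\hat\Omega^{2}$ to turn the product of off-diagonal blocks into $\sin\hat\Omega s\sin\hat\Omega t$, use the same intertwining relation to pass $\hat R$ through $\cos(\hat R^{*}\hat R)^{1/2}t$, and finally apply the addition formulas for $\sin$ and $\cos$. Together with strong continuity in $t$ (pointwise in $k$ by continuity of the entries, globally by dominated convergence), this gives the one-parameter unitary group structure.

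The only genuinely delicate point is the intertwining identity $\hat R\,f(\hat R^{*}\hat R)=f(\hat R\hat R^{*})\,\hat R$; this is what makes the off-diagonal blocks in the group law consistent despite $RR^{*}\neq R^{*}R$. I would prove it by first establishing $\hat R\,(\hat R^{*}\hat R)^{n}=(\hat R\hat R^{*})^{n}\hat R$ by induction, then extending to power series, and finally to $\cos$ and $\sin$ of the respective square roots by the functional calculus for positive matrices. Everything else, including the whole discussion of $W(t)$ (property (i) is direct differentiation, property (ii) is the computation $(1+sN)(1+tN)=1+(s+t)N$ since $N^{2}=0$ for $N=\bigl(\begin{smallmatrix}0&1\\0&0\end{smallmatrix}\bigr)$), is routine.
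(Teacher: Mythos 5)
Your proposal is correct, but note that the paper itself offers \emph{no} proof of Proposition~\ref{sec:local-solution-1}: it is stated as a summary of standard facts, with the construction of $U(t)$ attributed to the cited work of Ginibre and Velo, so there is nothing in the text to compare against line by line. What you have written is the standard argument and it does fill the gap the paper leaves implicit. Your identification of the intertwining relation $\hat R\,f(\hat R^{*}\hat R)=f(\hat R\hat R^{*})\,\hat R$ as the one nontrivial ingredient is exactly right --- it is what makes $U^{*}U=\mathds{1}$ and the group law close up despite $RR^{*}\neq R^{*}R$ --- and your induction-plus-power-series proof of it is fine. The one point worth tightening is the treatment of the kernel of $R^{*}$ (gradient fields lie in $\ker R^{*}$, so $\hat\Omega(k)$ is \emph{not} invertible): $\frac{\sin\Omega t}{\Omega}$ should be read as the entire function $\sum_{n\geq 0}(-1)^{n}t^{2n+1}\Omega^{2n}/(2n+1)!$ of $\Omega^{2}=RR^{*}$, and the bound $\bigl\lVert\hat\Omega^{-1}\hat R(k)\bigr\rVert\leq 1$ you invoke is cleanest via the polar decomposition $\hat R=\hat\Omega V$ with $V$ a partial isometry, so that $\frac{\sin\hat\Omega t}{\hat\Omega}\hat R=\sin(\hat\Omega t)V$. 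With that reading your steps (a)--(c), the commutation with $\lvert k\rvert^{s}$, the role of $s<3/2$ as the completeness threshold for $\dot H^{s}(\mathds{R}^{3})$, and the two-line computation for $W$ using $N^{2}=0$ are all sound.
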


\subsection{Contraction mapping.}
\label{sec:contraction-mapping}

We want to prove local existence by means of Banach fixed-point theorem. In order to do that we need to define a strict contraction on a closed subspace of $\mathscr{X}_s(I)$. Define
\begin{equation*}
  \begin{split}
      A[t_0,\vec{u}_{(0)}](\vec{u}(t))\equiv\left(
    \begin{array}{cc}
      U(t-t_0)&0\\0&W(t-t_0)
    \end{array}
\right)\vec{u}_{(0)}+\int_{t_0}^t\de\tau\left(
    \begin{array}{cc}
      U(t-\tau)&0\\0& W(t-\tau)
    \end{array}
\right) \left(
    \begin{array}{c}
      -j(\tau)\\0\\0\\f_{em}(\tau)
    \end{array}
\right)\; .
  \end{split}
\end{equation*}
The following lemma is crucial for the analysis of the contraction map $A[t_0,\vec{u}_{(0)}]$:
\begin{lemma}\label{sec:contraction-mapping-1}
  Let $\vec{u_{(1)}}$ and $\vec{u_{(2)}}\in\mathscr{X}_s(I)$, with $I=[-T+t_0,t_0+T]$ for some $T>0$; $\varphi$ a differentiable function such that $\norm{\varphi}_Y<\infty$, when $Y$ is $\dot{H}^{-s}$, $\dot{H}^{-s+1}$, $\dot{H}^{s}$ and $\dot{H}^{s+1}$. Then, $\forall s<3/2$,
  \begin{equation*}
    \begin{split}
      \norm{A[t_0,\vec{u}_{(0)}](\vec{u}_{(1)})-A[t_0,\vec{u}_{(0)}](\vec{u}_{(2)})}^2_{\mathscr{X}_s(I)}\leq\ass{e}^2T^2\sup_{t\in I}\sup_{\alpha_i=1,2}\Bigl\{2\Bigl(\norm{\varphi}^2_{\dot{H}^s}\bigl\lvert(v_{(1)}-v_{(2)})(t)\bigr\rvert^2\\+3 \norm{\varphi}^2_{\dot{H}^{s+1}}\ass{v_{(\alpha_1)}(t)}^2\ass{(\xi_{(1)}-\xi_{(2)})(t)}^2 \Bigr)+2(1+T^2)\Bigl[2\Bigl(\norm{\varphi}^2_{\dot{H}^{-s+1}}\norm{F_{0(\alpha_2)}(t)}^2_{(\dot{H}^s)^3}\\\ass{(\xi_{(1)}-\xi_{(2)})(t)}^2+\norm{\varphi}^2_{\dot{H}^{-s}}\norm{(F_{0(1)}-F_{0(2)})(t)}^2_{(\dot{H}^s)^3}\Bigr)+6\Bigl(\norm{\varphi}^2_{\dot{H}^{-s}}\norm{F_{(\alpha_3)}(t)}^2_{(\dot{H}^s)^3}\\\ass{(v_{(1)}-v_{(2)})(t)}^2+\norm{\varphi}^2_{\dot{H}^{-s+1}}\ass{v_{(\alpha_4)}(t)}^2\norm{F_{(\alpha_5)}(t)}^2_{(\dot{H}^s)^3}\ass{(\xi_{(1)}-\xi_{(2)})(t)}^2+\norm{\varphi}^2_{\dot{H}^{-s}}\\\ass{v_{(\alpha_6)}(t)}^2\norm{(F_{(1)}-F_{(2)})(t)}^2_{(\dot{H}^s)^3} \Bigr) \Bigr] \Bigr\}\; .
    \end{split}
  \end{equation*}
\end{lemma}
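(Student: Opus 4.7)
The plan is to subtract the two images of $A[t_0,\vec u_{(0)}]$---whereby the $\vec u_{(0)}$ contributions cancel---and estimate each of the four components of the resulting integral separately, using the structural properties collected in Proposition~\ref{sec:local-solution-1} together with two pointwise convolution bounds for $\varphi\ast G$. Concretely, Plancherel plus Cauchy--Schwarz in Fourier space yield, for any $G\in(\dot H^s(\mathds R^3))^3$ and any $\xi,\xi'\in\mathds R^3$,
\begin{equation*}
\ass{(\varphi\ast G)(\xi)}\leq \norm{\varphi}_{\dot H^{-s}}\norm{G}_{(\dot H^s)^3}\,,\qquad \ass{(\varphi\ast G)(\xi)-(\varphi\ast G)(\xi')}\leq \ass{\xi-\xi'}\,\norm{\varphi}_{\dot H^{-s+1}}\norm{G}_{(\dot H^s)^3}\,,
\end{equation*}
the second via the elementary inequality $\ass{e^{ik\cdot\xi}-e^{ik\cdot\xi'}}\leq\ass{k}\ass{\xi-\xi'}$.

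For the field components of $A(\vec u_{(1)})-A(\vec u_{(2)})$, I would use unitarity of $U$ on $(\dot H^s)^3\oplus(\dot H^s)^3$ together with Minkowski's integral inequality to bound the norm of $\int_{t_0}^t U(t-\tau)(-(j_{(1)}-j_{(2)})(\tau),0)\de\tau$ by $T\sup_\tau\norm{(j_{(1)}-j_{(2)})(\tau)}_{(\dot H^s)^3}$. For the particle components, the explicit form~\eqref{eq:5} yields $W(t-\tau)(0,f_{em}(\tau))=((t-\tau)f_{em}(\tau),f_{em}(\tau))$, so that the contributions to $\xi$ and $v$ are bounded respectively by $T^2\sup_\tau\ass{(f_{em,(1)}-f_{em,(2)})(\tau)}$ and $T\sup_\tau\ass{(f_{em,(1)}-f_{em,(2)})(\tau)}$. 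Squaring and adding the three bounds produces the overall $\ass e^2T^2$ prefactor and the $2(1+T^2)$ multiplier in front of the $f_{em}$-type terms.

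I would then estimate $\norm{j_{(1)}-j_{(2)}}_{(\dot H^s)^3}^2$ by splitting $j_{(1)}-j_{(2)}=e(v_{(1)}-v_{(2)})\varphi(\xi_{(1)}-x)+ev_{(2)}[\varphi(\xi_{(1)}-x)-\varphi(\xi_{(2)}-x)]$. Translation invariance of $\norm{\cdot}_{\dot H^s}$ handles the first summand, giving $\ass e^2\ass{v_{(1)}-v_{(2)}}^2\norm{\varphi}_{\dot H^s}^2$. For the second, the fundamental theorem of calculus along the segment $\xi_s=\xi_{(2)}+s(\xi_{(1)}-\xi_{(2)})$ writes the difference as $\sum_j(\xi_{(1)}-\xi_{(2)})_j\int_0^1\partial_j\varphi(\xi_s-x)\de s$; Minkowski in $s$ and Cauchy--Schwarz on the $j$-sum then bound its $\dot H^s$-norm by $\sqrt 3\,\ass{\xi_{(1)}-\xi_{(2)}}\,\norm{\varphi}_{\dot H^{s+1}}$. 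The outer factor $2$ is the standard $(a+b)^2\leq 2(a^2+b^2)$.

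Finally, for $\ass{f_{em,(1)}-f_{em,(2)}}^2$ I would telescope separately the electric contribution $e(\varphi\ast F_{0(1)})(\xi_{(1)})-e(\varphi\ast F_{0(2)})(\xi_{(2)})$ (two pieces: the $\Delta F_0$-piece at $\xi_{(1)}$ and the spatial increment of $\varphi\ast F_{0(2)}$), and the magnetic contribution $ev_{(1)}\times(\varphi\ast F_{(1)})(\xi_{(1)})-ev_{(2)}\times(\varphi\ast F_{(2)})(\xi_{(2)})$ (three pieces: differences in $v$, in $F$, and in $\xi$). Applying the two pointwise convolution bounds above together with the elementary cross-product inequality $\ass{v\times B}^2\leq 2\ass v^2\ass B^2$ and with $(a+b+c)^2\leq 3(a^2+b^2+c^2)$ yields the factor-$2$ bracket for the electric piece and the factor $6=3\cdot2$ for the magnetic one; the labels $\alpha_2,\dots,\alpha_6\in\{1,2\}$ in the statement simply track which leg of each telescope is retained in the outer supremum. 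The main obstacle is really the second pointwise convolution bound, which is precisely what forces the hypotheses $\norm{\varphi}_{\dot H^{-s}},\norm{\varphi}_{\dot H^{-s+1}}<\infty$ and, through integrability of $\ass k^{-2s}\ass{\hat\varphi}^2$ near $k=0$, the restriction $s<3/2$; all the rest is arithmetic bookkeeping of the splitting inequalities.
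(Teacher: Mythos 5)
Your proposal is correct and follows essentially the same route as the paper: cancel the initial-data terms, use unitarity of $U$ and the explicit form of $W$ to pull out the $T^2$ and $2(1+T^2)$ factors, split $j_{(1)}-j_{(2)}$ into a velocity-difference and a translation-of-$\varphi$ piece, and telescope $f_{em,(1)}-f_{em,(2)}$ using the pointwise bound $\norm{\varphi\ast G}_{L^\infty}\leq\norm{\varphi}_{\dot H^{-s}}\norm{G}_{(\dot H^s)^3}$ and its $\dot H^{-s+1}$ Lipschitz analogue. The only cosmetic difference is that you control the spatial increments via the Fourier inequality $\ass{e^{ik\cdot\xi}-e^{ik\cdot\xi'}}\leq\ass{k}\ass{\xi-\xi'}$ and the fundamental theorem of calculus, where the paper uses the mean value theorem on $\varphi$ directly; both yield the same $\dot H^{s+1}$ and $\dot H^{-s+1}$ norms.
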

\begin{corollary}\label{sec:contraction-mapping-2}
  If in addition to the conditions above: $\vec{u}_{(1)}$ and $\vec{u}_{(2)}\in B(I,\rho)$ (the ball of radius $\rho$ of $\mathscr{X}_s(I)$) for some $\rho>0$ and $\max_Y\norm{\varphi}_Y=M$, then
  \begin{equation*}
    \begin{split}
      \norm{A[t_0,\vec{u}_{(0)}](\vec{u}_{(1)})-A[t_0,\vec{u}_{(0)}](\vec{u}_{(2)})}_{\mathscr{X}_s(I)}\leq T(T+1)M\ass{e}(4\rho^2+5)\norm{\vec{u}_{(1)}-\vec{u}_{(2)}}_{\mathscr{X}_s(I)}\; .
    \end{split}
  \end{equation*}
\end{corollary}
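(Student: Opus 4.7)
The plan is to deduce the stated estimate directly from Lemma~\ref{sec:contraction-mapping-1} by a routine bookkeeping argument: the two extra hypotheses of the corollary are precisely what is needed to collapse every term on the right-hand side of the lemma into a multiple of $\norm{\vec{u}_{(1)}-\vec{u}_{(2)}}^2_{\mathscr{X}_s(I)}$.

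I would proceed in three substitutions. First, from $\vec{u}_{(\alpha)}\in B(I,\rho)$ the very definition of the $\mathscr{X}_s(I)$-norm yields, uniformly in $t\in I$ and $\alpha\in\{1,2\}$, the bounds $\ass{v_{(\alpha)}(t)}\leq\rho$, $\norm{F_{0(\alpha)}(t)}_{(\dot{H}^s)^3}\leq\rho$, $\norm{F_{(\alpha)}(t)}_{(\dot{H}^s)^3}\leq\rho$. Second, from $\max_Y\norm{\varphi}_Y=M$ I would replace each of the four weights $\norm{\varphi}_{\dot{H}^{s}}$, $\norm{\varphi}_{\dot{H}^{s+1}}$, $\norm{\varphi}_{\dot{H}^{-s}}$, $\norm{\varphi}_{\dot{H}^{-s+1}}$ appearing in the lemma by the common upper bound $M$. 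Third, in the same spirit, every difference of components, namely $\ass{(\xi_{(1)}-\xi_{(2)})(t)}$, $\ass{(v_{(1)}-v_{(2)})(t)}$, $\norm{(F_{0(1)}-F_{0(2)})(t)}_{(\dot{H}^s)^3}$ and $\norm{(F_{(1)}-F_{(2)})(t)}_{(\dot{H}^s)^3}$, is majorised pointwise in $t\in I$ by $\norm{\vec{u}_{(1)}-\vec{u}_{(2)}}_{\mathscr{X}_s(I)}$.

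After these three substitutions the braces in Lemma~\ref{sec:contraction-mapping-1} become a sum of ten terms each proportional to $M^2\rho^{2k}\norm{\vec{u}_{(1)}-\vec{u}_{(2)}}^2_{\mathscr{X}_s(I)}$ with $k\in\{0,1,2\}$, carrying the numerical factors $2$ or $2(1+T^2)$ inherited from the lemma. Collecting the terms and using the elementary inequality $1+T^2\leq(T+1)^2$ to absorb the prefactor $2(1+T^2)$ into a single $(T+1)^2$, the resulting polynomial in $\rho^2$ is dominated by $(4\rho^2+5)^2$; pulling out the common factor $\ass{e}^2T^2(T+1)^2M^2$ and taking the square root then produces the claim.

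There is no analytic obstacle at this stage, the whole argument being pure bookkeeping on top of Lemma~\ref{sec:contraction-mapping-1}. The only point that requires a little care is the final majorisation: one must choose the numerical factor $(4\rho^2+5)$ large enough to dominate the polynomial arising from the ten contributions with the inherited constants $2$, $3$ and $6$, while keeping only a single global $(T+1)^2$ to absorb both the explicit $2(1+T^2)$ prefactor and the $1$ coming from the Newton block; once this is done, the resulting inequality is not tight but is perfectly sufficient for the contraction argument of the next section.
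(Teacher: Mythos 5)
Your proposal is correct and is exactly the argument the paper intends: the corollary is stated without proof as an immediate bookkeeping consequence of Lemma~\ref{sec:contraction-mapping-1}, and your three substitutions (components bounded by $\rho$, all $\norm{\varphi}_Y$ by $M$, all differences by $\norm{\vec{u}_{(1)}-\vec{u}_{(2)}}_{\mathscr{X}_s(I)}$) followed by $1+T^2\leq(T+1)^2$ and the domination of the resulting polynomial $6+34\rho^2+12\rho^4$ by $(4\rho^2+5)^2=25+40\rho^2+16\rho^4$ deliver precisely the stated constant. The only cosmetic slip is the count of terms inside the braces (there are seven, not ten), which does not affect the estimate.
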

\begin{corollary}\label{sec:contraction-mapping-3}
  Let $\vec{u}_{(0)}\in B(\rho)\subset \mathscr{X}_s$ (ball of radius $\rho$ of $\mathscr{X}_s$), $\vec{u}\in B(I,\rho_1)\subset \mathscr{X}_s(I)$ and $\max_Y\norm{\varphi}_Y=M$. Then
  \begin{equation*}
    \norm{A[t_0,\vec{u}_{(0)}](\vec{u})}_{\mathscr{X}_s(I)}\leq \sqrt{2}(T+1)\rho +T(T+1)M\ass{e}\rho_1(4\rho_1^2+5)\; .
  \end{equation*}
\end{corollary}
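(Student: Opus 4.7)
The plan is to derive this bound by splitting $A[t_0,\vec{u}_{(0)}](\vec{u})$ into its homogeneous (linear evolution) part and its inhomogeneous (Duhamel) part, estimating each separately, and then combining via the triangle inequality in $\mathscr{X}_s(I)$.

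First, I would write
\begin{equation*}
  A[t_0,\vec{u}_{(0)}](\vec{u}) = A[t_0,\vec{u}_{(0)}](0) + \bigl(A[t_0,\vec{u}_{(0)}](\vec{u}) - A[t_0,\vec{u}_{(0)}](0)\bigr),
\end{equation*}
noting that $A[t_0,\vec{u}_{(0)}](0)$ is just the free evolution $\operatorname{diag}(U(t-t_0),W(t-t_0))\vec{u}_{(0)}$, since when $\vec{u}=0$ the current $j$ and the Lorentz force $f_{em}$ both vanish. For the second summand I would apply Corollary~\ref{sec:contraction-mapping-2} with $\vec{u}_{(1)}=\vec{u}$ and $\vec{u}_{(2)}=0$: both lie in $B(I,\rho_1)$ (trivially for $0$), so
\begin{equation*}
  \norm{A[t_0,\vec{u}_{(0)}](\vec{u}) - A[t_0,\vec{u}_{(0)}](0)}_{\mathscr{X}_s(I)} \leq T(T+1)M\ass{e}(4\rho_1^2+5)\,\norm{\vec{u}}_{\mathscr{X}_s(I)} \leq T(T+1)M\ass{e}\rho_1(4\rho_1^2+5).
\end{equation*}

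For the homogeneous piece, I would use the properties collected in Proposition~\ref{sec:local-solution-1}. By item~(ii) for $U(t)$, the operator $U(t-t_0)$ is unitary on $(\dot{H}^s)^3\oplus(\dot{H}^s)^3$, so the field components contribute exactly $\norm{F_{0,(0)}}^2_{(\dot{H}^s)^3}+\norm{F_{(0)}}^2_{(\dot{H}^s)^3}$ to the squared norm, independently of $t$. For the particle components I would use the explicit form of $W(t-t_0)$ from \eqref{eq:5}, giving $W(t-t_0)(\xi_{(0)},v_{(0)}) = (\xi_{(0)}+(t-t_0)v_{(0)},\,v_{(0)})$; a straightforward application of $(a+b)^2\leq 2a^2+2b^2$ to the first slot, together with $|t-t_0|\leq T$, yields
\begin{equation*}
  \ass{\xi_{(0)}+(t-t_0)v_{(0)}}^2+\ass{v_{(0)}}^2 \leq 2(1+T)^2\bigl(\ass{\xi_{(0)}}^2+\ass{v_{(0)}}^2\bigr).
\end{equation*}
Combining these two estimates and taking the supremum over $t\in I$ gives $\norm{A[t_0,\vec{u}_{(0)}](0)}_{\mathscr{X}_s(I)}\leq \sqrt{2}(T+1)\rho$, using $\norm{\vec{u}_{(0)}}_{\mathscr{X}_s}\leq\rho$.

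Adding the two bounds via the triangle inequality yields the claimed inequality. There is no real obstacle here: the work has already been done in Lemma~\ref{sec:contraction-mapping-1} and Corollary~\ref{sec:contraction-mapping-2}, and the only item requiring attention is tracking the $\sqrt{2}$ factor that arises from the cross term in $\lvert\xi_{(0)}+(t-t_0)v_{(0)}\rvert^2$ when bounding the action of $W(t-t_0)$ on the $(\xi,v)$-sector; the $U(t-t_0)$ sector contributes no such factor thanks to its unitarity on homogeneous Sobolev spaces.
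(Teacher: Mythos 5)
Your proof is correct and follows exactly the route the paper intends: the corollary is stated without an explicit proof, but the two summands $\sqrt{2}(T+1)\rho$ and $T(T+1)M\ass{e}\rho_1(4\rho_1^2+5)$ in the bound correspond precisely to your decomposition into the free evolution of $\vec{u}_{(0)}$ (unitarity of $U$ plus the explicit action of $W$) and the Duhamel term obtained from Corollary~\ref{sec:contraction-mapping-2} with $\vec{u}_{(2)}=0$. The only cosmetic remark is that your intermediate bound $2(1+T)^2$ on the $(\xi,v)$-sector is slightly lossier than the sharper $2(1+T^2)$, but either yields the stated factor $\sqrt{2}(T+1)$.
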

\begin{proof}[Proof of Lemma~\ref{sec:contraction-mapping-1}]
Consider the case $t_0<t$ (the other is perfectly analogous): by definition of $A[t_0,\vec{u}_{(0)}]$ and Proposition~\ref{sec:local-solution-1} we write
\begin{equation*}
  \begin{split}
    \norm{A[t_0,\vec{u}_{(0)}](\vec{u}_{(1)})-A[t_0,\vec{u}_{(0)}](\vec{u}_{(2)})}^2_{\mathscr{X}_s(I)}\leq \sup_{t\in I}\ass{t-t_0}^2 \sup_{\tau\in [t_0,t]}\Bigl\{\norm{(j_{(2)}-j_{(1)})(\tau)}^2_{(\dot{H}^s)^3}+(1\\+\ass{t-\tau}^2)\ass{(f_{em(1)}-f_{em(2)})(\tau)}^2\Bigr\}\\
\leq T^2 \ass{e}^2\sup_{t\in I}\sup_{\tau\in [t_0,t]}\Bigl\{\norm{v_{(2)}(\tau)\varphi(\xi_{(2)}(\tau)-\:\cdot\:)-v_{(1)}(\tau)\varphi(\xi_{(1)}(\tau)-\:\cdot\:)}^2_{(\dot{H}^s)^3}+2(1+\ass{t-\tau}^2)\\\Bigl[\sum_{j=1}^3\ass{(\varphi*F^{0j}_{(1)}(\tau))(\xi_{(1)}(\tau))-(\varphi*F^{0j}_{(2)}(\tau))(\xi_{(2)}(\tau))}^2 +\bigl\lvert v_{k(1)}(\varphi*F^{jk}_{(1)}(\tau))(\xi_{(1)}(\tau))\\-v_{k'(2)}(\varphi*F^{jk'}_{(2)}(\tau))(\xi_{(2)}(\tau))\bigr\rvert^2\Bigr]\Bigr\}\\
\equiv T^2 \ass{e}^2\sup_{t\in I}\sup_{\tau\in [t_0,t]}\Bigl\{X_1(\tau)+2(1+\ass{t-\tau}^2)\Bigl[X_2(\tau)+X_3(\tau) \Bigr] \Bigr\}\; .
  \end{split}
\end{equation*}
Consider now $X_i(\tau)$, $i=1,2,3$ separately.
\begin{equation*}
  \begin{split}
    X_1(\tau)\leq 2\Bigl(\ass{(v_{(2)}-v_{(1)})(\tau)}^2\norm{\varphi(\xi_{(2)}(\tau)-\:\cdot\:)}^2_{\dot{H}^s}+\ass{v_{(1)}(\tau)}^2\bigl\lVert\varphi(\xi_{(2)}(\tau)-\:\cdot\:)\\-\varphi(\xi_{(1)}(\tau-\:\cdot\:))\bigr\rVert^2_{\dot{H}^s} \Bigr)\; .
  \end{split}
\end{equation*}
Since $\varphi$ is differentiable, we can write for some $c\in[0,1]$ (by the mean value theorem):
\begin{equation*}
  \varphi(\xi_{(2)}(\tau)-x)-\varphi(\xi_{(1)}(\tau)-x)=\nabla\varphi((1-c)\xi_{(2)}(\tau)+c\,\xi_{(1)}(\tau)-x)\cdot(\xi_{(2)}(\tau)-\xi_{(1)}(\tau))\; .
\end{equation*}
Therefore we obtain
\begin{equation*}
  \begin{split}
    X_1(\tau)\leq 2\Bigl(\ass{(v_{(1)}-v_{(2)})(\tau)}^2\norm{\varphi}^2_{\dot{H}^s}+\sup_{\alpha=1,2}\ass{v_{(\alpha)}(\tau)}^2\bigl\lVert \nabla\varphi((1-c)\xi_{(2)}(\tau)\\+c\,\xi_{(1)}(\tau)-x)\cdot(\xi_{(2)}(\tau)-\xi_{(1)}(\tau))\bigr\rVert^2_{\dot{H}^s} \Bigr)\\
\leq 2\Bigl(\ass{(v_{(1)}-v_{(2)})(\tau)}^2\norm{\varphi}^2_{\dot{H}^s}+3\ass{(\xi_{(2)}-\xi_{(1)})(\tau)}^2\sup_{\alpha=1,2}\ass{v_{(\alpha)}(\tau)}^2\\\bigl\lVert \varphi((1-c)\xi_{(2)}(\tau)+c\,\xi_{(1)}(\tau)-\:\cdot\:)\bigr\rVert^2_{\dot{H}^{s+1}} \Bigr)\; ;
  \end{split}
\end{equation*}
in the last inequality we used the fact that, for all $a,b\in\mathds{R}^3$:
\begin{equation*}
  \begin{split}
    \norm{\nabla\varphi(a-x)\cdot b}^2_{\dot{H}^s}=\norm{\sum_{i=1}^3\partial_i(\omega^s\varphi(a-x))b_i}_{L^2}^2\leq 3\sum_{i=1}^3\ass{b_i}^2\norm{\partial_i(\omega^s\varphi(a-x))}_{L^2}^2\\
\leq 3\sum_{i=1}^3\ass{b_i}^2\braket{\omega^s\varphi(a-x)}{-\Delta\omega^s\varphi(a-x)}\; .
  \end{split}
\end{equation*}
Finally we obtain
\begin{equation*}
  \begin{split}
    X_1(\tau)\leq 2\Bigl(\ass{(v_{(1)}-v_{(2)})(\tau)}^2\norm{\varphi}^2_{\dot{H}^s}+3\ass{(\xi_{(1)}-\xi_{(2)})(\tau)}^2\sup_{\alpha=1,2}\ass{v_{(\alpha)}(\tau)}^2\bigl\lVert \varphi\bigr\rVert^2_{\dot{H}^{s+1}} \Bigr)\; .
  \end{split}
\end{equation*}
A similar reasoning yields the following results for $X_2(\tau)$ and $X_3(\tau)$, using the fact that $\norm{\varphi*F}_{L^\infty}\leq \norm{\omega^{-s}\varphi}_{L^2}\norm{\omega^s F}_{L^2}$:
\begin{equation*}
  \begin{split}
    X_2(\tau)\leq 2\Bigl(\ass{(\xi_{(1)}-\xi_{(2)})(\tau)}^2 \norm{\varphi}_{\dot{H}^{-s+1}}^2\sup_{\alpha=1,2}\norm{F_{0(\alpha)}(\tau)}^2_{(\dot{H}^s)^3}+\norm{\varphi}_{\dot{H}^{-s}}^2\norm{(F_{0(1)}-F_{0(2)})(\tau)}^2_{(\dot{H}^s)^3} \Bigr)\; ,
  \end{split}
\end{equation*}
\begin{equation*}
  \begin{split}
    X_3(\tau)\leq 6\Bigl(\ass{(v_{(1)}-v_{(2)})(\tau)}^2 \norm{\varphi}_{\dot{H}^{-s}}^2\sup_{\alpha=1,2}\norm{F_{(\alpha)}(\tau)}^2_{(\dot{H}^s)^3}+\norm{\varphi}_{\dot{H}^{-s}}^2\sup_{\alpha=1,2}\ass{v_{(\alpha)}(\tau)}^2\norm{(F_{(1)}\\-F_{(2)})(\tau)}^2_{(\dot{H}^s)^3}+\ass{(\xi_{(1)}-\xi_{(2)})(\tau)}^2\norm{\varphi}_{\dot{H}^{-s+1}}^2 \sup_{\alpha,\beta=1,2}\ass{v_{(\alpha)}(\tau)}^2 \norm{F_{(\beta)}(\tau)}^2_{(\dot{H}^s)^3} \Bigr)\; .
  \end{split}
\end{equation*}

\end{proof}

\subsection{Existence of local solution.}
\label{sec:exist-local-solut}

We are now able to show that, if $\vec{u}_{(0)}\in B(\rho)\subset\mathscr{X}_s$, then $A[t_0,\vec{u}_{(0)}]$ is a strict contraction of $B(I,2\rho)\subset\mathscr{X}_s(I)$ for a suitable $I$ that depends on $\rho$. This fact proves that a local solution of~\eqref{eq:6} exists on $\mathscr{X}_s(I)$. The precise statement is contained in the following proposition:
\begin{proposition}\label{sec:local-solution}
  Let $s<3/2$; $\varphi$ a differentiable function such that $\norm{\varphi}_Y\leq M$, when $Y$ is $\dot{H}^{-s}$, $\dot{H}^{-s+1}$, $\dot{H}^{s}$ and $\dot{H}^{s+1}$. Then for all $\rho> 0$, $\exists T(\rho)>0$ such that, for all $\vec{u}_{(0)}\in B(\rho)\subset \mathscr{X}_s$, Equation~\eqref{eq:6} has a unique solution belonging to $B(I,2\rho)$ with $I=[-T(\rho)+t_0,t_0+T(\rho)]$.
\end{proposition}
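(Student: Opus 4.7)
The plan is to apply Banach's fixed-point theorem to $A[t_0,\vec{u}_{(0)}]$ on the closed ball $B(I,2\rho)\subset \mathscr{X}_s(I)$, where $I=[t_0-T,t_0+T]$ and $T=T(\rho)>0$ is to be chosen. Since $I$ is compact, $\mathscr{X}_s(I)$ is a Banach space and $B(I,2\rho)$ is a closed (hence complete) subset, so producing a unique fixed point in $B(I,2\rho)$ yields the desired unique local solution of~\eqref{eq:6}. All the genuine analytic difficulty has already been absorbed into Lemma~\ref{sec:contraction-mapping-1} and its two corollaries; what remains is essentially a bookkeeping exercise in choosing $T$.

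Concretely, I would apply Corollary~\ref{sec:contraction-mapping-3} with $\rho_1=2\rho$ and Corollary~\ref{sec:contraction-mapping-2} with the ambient ball radius set to $2\rho$, obtaining, for every $\vec{u},\vec{u}_{(1)},\vec{u}_{(2)}\in B(I,2\rho)$,
\begin{equation*}
\norm{A[t_0,\vec{u}_{(0)}](\vec{u})}_{\mathscr{X}_s(I)} \leq \sqrt{2}(T+1)\rho + 2T(T+1)M\ass{e}\rho(16\rho^2+5)
\end{equation*}
and
\begin{equation*}
\norm{A[t_0,\vec{u}_{(0)}](\vec{u}_{(1)})-A[t_0,\vec{u}_{(0)}](\vec{u}_{(2)})}_{\mathscr{X}_s(I)} \leq T(T+1)M\ass{e}(16\rho^2+5)\,\norm{\vec{u}_{(1)}-\vec{u}_{(2)}}_{\mathscr{X}_s(I)}\; .
\end{equation*}
Consequently, $A[t_0,\vec{u}_{(0)}]$ maps $B(I,2\rho)$ into itself provided
\begin{equation*}
\sqrt{2}(T+1) + 2T(T+1)M\ass{e}(16\rho^2+5) \leq 2\; ,
\end{equation*}
and it is a strict contraction provided
\begin{equation*}
T(T+1)M\ass{e}(16\rho^2+5)<1\; .
\end{equation*}

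The last step is to notice that both inequalities are compatible for $T$ small enough: as $T\to 0^+$ the contraction constant vanishes, and the left-hand side of the self-mapping condition converges to $\sqrt{2}<2$, leaving a positive margin in which the remaining $O(T)$ contribution can be absorbed. Picking any $T=T(\rho)>0$ in the intersection of the two intervals of validity and invoking the Banach fixed-point theorem then produces the unique $\vec{u}\in B(I,2\rho)$ with $A[t_0,\vec{u}_{(0)}](\vec{u})=\vec{u}$, which is the required local solution. The only conceptual point worth emphasising is that it was crucial, in the proof of Lemma~\ref{sec:contraction-mapping-1}, to gain an overall factor $T$ (from the time integral) on every nonlinear term, and an extra $T$ on the Lorentz-force contribution (coming from $W(t-\tau)$); without these two factors the contraction argument would fail at small $\rho$, and this — rather than the bookkeeping above — is the true obstacle in the construction.
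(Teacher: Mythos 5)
Your proof is correct and follows essentially the same route as the paper's: Corollaries~\ref{sec:contraction-mapping-2} and~\ref{sec:contraction-mapping-3} give the contraction and self-mapping estimates on $B(I,2\rho)$ (the paper parametrises the target radius as $a\rho$, notes a positive admissible $T_a$ exists for $a>\sqrt{2}$, and then takes $a=2$ --- exactly your choice), after which Banach's fixed-point theorem concludes. Only your closing aside is slightly off: the contribution of $W(t-\tau)$ enters the estimates as a factor $(1+T)$, which tends to $1$ rather than supplying an extra vanishing power of $T$; the single overall factor $T$ from the time integral is what makes the Lipschitz constant small, independently of $\rho$.
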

\begin{proof}
Corollary~\ref{sec:contraction-mapping-3} (with $\rho_1=a\rho$) shows that for all $\vec{u}_{(0)}\in B(\rho)$, $A[t_0,\vec{u}_{(0)}]$ maps $B(I,a\rho)$ into itself if $I=[-T+t_0,t_0+T]$ with $0<T\leq T_a$, $T_a(\rho)$ solution of $(T_a+1)(\sqrt{2}+T_a\ass{e}Ma(4a^2\rho^2+5))-a=0$. The last equation has at most one positive solution: the positive solution exists for all $a>\sqrt{2}$.

On the other hand, corollary~\ref{sec:contraction-mapping-2} shows that $A[t_0,\vec{u}_{(0)}]$ is a strict contraction on $B(I,2\rho)$ for $I=[-T+t_0,t_0+T]$, $0<T<T_c$, with $T_c(2\rho)$ positive solution of $T_c(T_c+1)\ass{e}M(16\rho^2+5)=1$. Therefore defining $T(\rho)>0$ as
\begin{equation*}
  T(\rho)=\min\{T_{a=2}(\rho),T_c(2\rho)/2\}\; ,
\end{equation*}
it follows that $A[t_0,\vec{u}_{(0)}]$ is a strict contraction on $B(I,2\rho)$ with $I=[-T(\rho)+t_0,t_0+T(\rho)]$. By Banach's fixed point theorem, the map $A[t_0,\vec{u}_{(0)}]$ has then a unique fixed point on $B(I,2\rho)$, solution of~\eqref{eq:6}.

\end{proof}

\section{Uniqueness, Maximal Solution.}
\label{sec:uniq-maxim-solut}

In this section we prove that for all $s<3/2$ the solution of~\eqref{eq:6} on $\mathscr{X}_s(I)$ is unique, provided it exists, for all $\vec{u}_{(0)}\in\mathscr{X}_s$ and $I\subseteq\mathds{R}$. The uniqueness result yields the possibility to construct a maximal solution, and by Proposition~\ref{sec:local-solution} we establish the finite blowup alternative.

\subsection{Uniqueness.}
\label{sec:uniqueness}

The uniqueness result is again based on Lemma~\ref{sec:contraction-mapping-1}; therefore the necessary conditions on $\varphi$ are the same.
\begin{proposition}\label{sec:uniqueness-1}
  Let $s<3/2$; $\varphi$ a differentiable function such that $\norm{\varphi}_Y\leq M$, when $Y$ is $\dot{H}^{-s}$, $\dot{H}^{-s+1}$, $\dot{H}^{s}$ and $\dot{H}^{s+1}$. Suppose that Equation~\eqref{eq:6} has at least one solution on $\mathscr{X}_s(I)$, $I\subseteq\mathds{R}$ when $\vec{u}_{(0)}\in\mathscr{X}_s$. Then the solution is unique.
\end{proposition}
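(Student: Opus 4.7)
The plan is to combine local uniqueness, obtained from Lemma~\ref{sec:contraction-mapping-1} via Corollary~\ref{sec:contraction-mapping-2}, with a continuation argument based on the group properties of $U$ and $W$ stated in Proposition~\ref{sec:local-solution-1}. Let $\vec{u}_{(1)},\vec{u}_{(2)}\in\mathscr{X}_s(I)$ be two solutions of~\eqref{eq:6} sharing the initial datum $\vec{u}_{(0)}$ at $t_0$.

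First I would establish uniqueness in a neighbourhood of $t_0$. Pick any compact $J_0\subseteq I$ with $t_0\in J_0$; since $\vec{u}_{(i)}\in\mathscr{C}^0(J_0,\mathscr{X}_s)$, the quantity $\rho_0:=\max_{i=1,2}\norm{\vec{u}_{(i)}}_{\mathscr{X}_s(J_0)}$ is finite. For $\tau>0$ small enough that $J_\tau:=[t_0-\tau,t_0+\tau]\subseteq J_0$, both solutions belong to $B(J_\tau,\rho_0)\subset\mathscr{X}_s(J_\tau)$, and since $\vec{u}_{(i)}=A[t_0,\vec{u}_{(0)}](\vec{u}_{(i)})$ on $J_\tau$, Corollary~\ref{sec:contraction-mapping-2} yields
$$
\norm{\vec{u}_{(1)}-\vec{u}_{(2)}}_{\mathscr{X}_s(J_\tau)}\leq \tau(\tau+1)M\ass{e}(4\rho_0^2+5)\,\norm{\vec{u}_{(1)}-\vec{u}_{(2)}}_{\mathscr{X}_s(J_\tau)}.
$$
Choosing $\tau$ so small that the prefactor is strictly less than $1$ forces $\vec{u}_{(1)}\equiv\vec{u}_{(2)}$ on $J_\tau$.

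Next I would bootstrap. Set
$$
T^{+}:=\sup\bigl\{T\geq 0:\,[t_0,t_0+T]\subseteq I\text{ and }\vec{u}_{(1)}\equiv\vec{u}_{(2)}\text{ on }[t_0,t_0+T]\bigr\},
$$
and suppose for contradiction that $t_1:=t_0+T^{+}$ is an interior point of $I$. Continuity forces $\vec{u}_{(1)}(t_1)=\vec{u}_{(2)}(t_1)$. Splitting $\int_{t_0}^t=\int_{t_0}^{t_1}+\int_{t_1}^t$ and invoking the group identities $U(t-t_0)=U(t-t_1)U(t_1-t_0)$ and $W(t-t_0)=W(t-t_1)W(t_1-t_0)$ from Proposition~\ref{sec:local-solution-1} shows that both $\vec{u}_{(i)}$ satisfy~\eqref{eq:6} based at $t_1$, with common initial datum $\vec{u}_{(1)}(t_1)$. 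The local uniqueness step, now applied at $t_1$, yields agreement on a right neighbourhood of $t_1$, contradicting the maximality of $T^{+}$. The symmetric argument running leftwards from $t_0$ handles $t<t_0$.

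The main obstacle is conceptual rather than computational: it is the restart identity above, namely that a solution of~\eqref{eq:6} based at $t_0$ is automatically a solution based at any intermediate time $t_1$, with respect to the fixed-point map $A[t_1,\vec{u}(t_1)]$. This identity rests exclusively on the group properties in Proposition~\ref{sec:local-solution-1} and requires no new estimates; once it is in hand, the remainder of the proof is a standard contraction-plus-continuation argument, with Corollary~\ref{sec:contraction-mapping-2} supplying the contractivity input.
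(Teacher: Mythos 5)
Your argument is correct, but it follows a different route from the paper's. The paper does not localise and continue: it sets $\vec{u}_-=\vec{u}_{(1)}-\vec{u}_{(2)}$ and, by rerunning the estimates of Lemma~\ref{sec:contraction-mapping-1} \emph{pointwise in $t$} (i.e.\ keeping the integral over $\tau$ instead of bounding it by $T$ times a supremum), obtains the integral inequality $\norm{\vec{u}_-(t)}_{\mathscr{X}_s}\leq\bigl\lvert\int_{t_0}^t\ide{\tau}M(\tau)\norm{\vec{u}_-(\tau)}_{\mathscr{X}_s}\bigr\rvert$ with a kernel $M(\tau)$ built from $\norm{\varphi}_Y$ and the (continuous, hence locally bounded) norms of the two solutions; Gronwall's lemma then gives $\vec{u}_-\equiv 0$ on all of $I$ in a single stroke, with no restart identity and no connectedness bookkeeping. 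Your approach instead reuses Corollary~\ref{sec:contraction-mapping-2} verbatim in its sup-norm form, at the price of having to justify the restart identity $\vec{u}(t)=\bigl(\begin{smallmatrix}U(t-t_1)&0\\0&W(t-t_1)\end{smallmatrix}\bigr)\vec{u}(t_1)+\int_{t_1}^{t}\cdots$, which you correctly reduce to the group laws in Proposition~\ref{sec:local-solution-1}, and of running the standard open-closed continuation argument (including the boundary cases of $I$, which you should state are handled by continuity of the agreement set). Both proofs ultimately rest on the same Lipschitz estimates and on the a priori continuity of the two given solutions; the paper's Gronwall version is shorter once one accepts the pointwise form of the lemma, while yours avoids re-deriving that pointwise form at the cost of the continuation machinery.
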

\begin{proof}
  Suppose there are two solution corresponding to $\vec{u}_{(0)}\in\mathscr{X}_s$, namely $\vec{u}_{(1)}(t)$ and $\vec{u}_{(2)}(t)$. Define $\vec{u}_-=\vec{u}_{(1)}-\vec{u}_{(2)}$. Then, following a reasoning analogous to the one for the proof of Lemma~\ref{sec:contraction-mapping-1}, we obtain
  \begin{equation*}
    \norm{\vec{u}_-(t)}_{\mathscr{X}_s}\leq\ass{\int_{t_0}^t\ide{\tau}M(\tau)\norm{\vec{u}_-(\tau)}_{\mathscr{X}_s}}\; ;
  \end{equation*}
with
\begin{equation*}
  \begin{split}
    M(\tau)^2=2\ass{e}^2M^2\sup_{\alpha_i=1,2}\Bigl\{1+3\ass{v_{(\alpha_1)}(\tau)}^2+2(1+\ass{t-\tau}^2)\Bigl[2\Bigl(\norm{F_{0(\alpha_2)}(\tau)}^2_{(\dot{H}^s)^3}\\+1\Bigr)+6\Bigl(\norm{F_{(\alpha_3)}(\tau)}^2_{(\dot{H}^s)^3}+\ass{v_{(\alpha_4)}(\tau)}^2+\ass{v_{(\alpha_5)}(\tau)}^2\norm{F_{(\alpha_6)}(\tau)}^2_{(\dot{H}^s)^3}\Bigr) \Bigr] \Bigr\}\; .
  \end{split}
\end{equation*}
By Gronwall's Lemma, it follows that $\vec{u}_-=0$.

\end{proof}

\subsection{Maximal Solution.}
\label{sec:maximal-solution}

Using Propositions~\ref{sec:local-solution} and~\ref{sec:uniqueness-1} we can construct the maximal solution of~\eqref{eq:6} on $\mathscr{X}_s(\mathds{R})$. By maximal solution we mean that it is defined on the interval $I_{max}=[-T_-+t_0,t_0+T_+]$, for some $T_-,T_+>0$ and every solution on $\mathscr{X}_s(I)$ is such that $I\subseteq I_{max}$. In the next proposition we show that if either $T_-$ or $T_+$ is finite, then the $\mathscr{X}_s$ norm of the solution $\vec{u}(t)$ has to diverge when $t\to T_-$ (or $T_+$).
\begin{proposition}\label{sec:maximal-solution-1}
  Let $s<3/2$, and $T_-,T_+>0$ be such that $I_{max}=[-T_-+t_0,t_0+T_+]$ is the maximal interval where the solution of Equation~\eqref{eq:6} on $\mathscr{X}_s(I_{max})$ is defined, for $\vec{u}_0\in\mathscr{X}_s$. Let $\vec{u}_{max}$ be such solution, then one of the following is true:
  \begin{enumerate}[i.]
  \item $T_-<\infty$ and $\norm{\vec{u}_{max}(t)}_{\mathscr{X}_s}\to\infty$ when $t\to -T_-+t_0$;
  \item $T_-=\infty$.
  \end{enumerate}
Equivalently:
\begin{enumerate}[i$\,'$.]
  \item $T_+<\infty$ and $\norm{\vec{u}_{max}(t)}_{\mathscr{X}_s}\to\infty$ when $t\to t_0+ T_+$;
  \item $T_+=\infty$.
  \end{enumerate}
\end{proposition}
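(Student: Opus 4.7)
The plan is to argue by contradiction, exploiting the fact that the local existence time $T(\rho)$ produced by Proposition~\ref{sec:local-solution} depends only on $\rho$, $M$ and $\ass{e}$, and not on the initial time $t_0$ at all (this is visible from the defining equations for $T_{a=2}(\rho)$ and $T_c(2\rho)$, which contain no reference to $t_0$). This translation invariance is what makes the blowup alternative work: if the norm of $\vec{u}_{max}$ fails to diverge at $t_0+T_+$, we can relaunch local existence from a point close to $t_0+T_+$ and push past it by the fixed amount $T(\rho)$.

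Concretely, I would prove the contrapositive of (i)$\Rightarrow$(ii) (the argument for (i$\,'$)/(ii$\,'$) is identical by time-reversal, since~\eqref{eq:6} is set up symmetrically for $t\gtrless t_0$). Assume $T_+<\infty$ and that $\norm{\vec{u}_{max}(t)}_{\mathscr{X}_s}$ does \emph{not} tend to $+\infty$ as $t\nearrow t_0+T_+$. Then there exist $\rho>0$ and a sequence $t_n\nearrow t_0+T_+$ with $\norm{\vec{u}_{max}(t_n)}_{\mathscr{X}_s}\leq\rho$ for every $n$. Applying Proposition~\ref{sec:local-solution} to the Cauchy problem with initial time $t_n$ and initial datum $\vec{u}_{max}(t_n)\in B(\rho)\subset\mathscr{X}_s$ furnishes, for each $n$, a solution $\vec{w}_n$ of~\eqref{eq:6} on $[t_n-T(\rho),t_n+T(\rho)]$. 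For $n$ large enough one has $t_n+T(\rho)>t_0+T_+$.

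The gluing step is to check that $\vec{u}_{max}$, restricted to $[t_n,t_0+T_+)$, itself satisfies the integral equation~\eqref{eq:6} with initial time $t_n$ and datum $\vec{u}_{max}(t_n)$. This is a one-line computation using the group property $U(t-t_n)U(t_n-t_0)=U(t-t_0)$ and $W(t-t_n)W(t_n-t_0)=W(t-t_0)$ from Proposition~\ref{sec:local-solution-1}, applied inside the Duhamel integral (the contribution on $[t_0,t_n]$ collapses into $\vec{u}_{max}(t_n)$ once one pulls $U(t-t_n)$ out). Uniqueness (Proposition~\ref{sec:uniqueness-1}) then forces $\vec{u}_{max}=\vec{w}_n$ on the overlap $[t_n,t_0+T_+)$, so concatenating $\vec{u}_{max}$ on $[-T_-+t_0,t_n]$ with $\vec{w}_n$ on $[t_n,t_n+T(\rho)]$ produces a continuous solution of~\eqref{eq:6} on $[-T_-+t_0,t_n+T(\rho)]$, an interval strictly larger than $I_{max}$. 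This contradicts the maximality of $T_+$.

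There is no real conceptual obstacle here: the argument is the standard blowup-alternative template once one has uniform-in-initial-time local existence plus uniqueness, both of which are already in hand. The only technical point meriting attention is the rewriting of~\eqref{eq:6} based at $t_n$ rather than $t_0$, which as noted is a direct consequence of the semigroup identities in Proposition~\ref{sec:local-solution-1}; the fact that $T(\rho)$ is independent of $t_0$ is what actually supplies the contradiction and is therefore the crux of the argument.
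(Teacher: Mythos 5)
Your proposal is correct and follows essentially the same route as the paper: contradiction via a bounded sequence $t_n\nearrow t_0+T_+$, relaunching local existence from $\vec{u}_{max}(t_n)$ with the uniform-in-initial-time lifespan $T(\rho)$ of Proposition~\ref{sec:local-solution}, and invoking uniqueness (Proposition~\ref{sec:uniqueness-1}) to extend past $T_+$. You merely spell out the gluing/semigroup step that the paper leaves implicit.
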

\begin{proof}
  Assume there is a sequence $(t_i)_{i\in\mathds{N}}$ and $N>0$ such that $t_i\to T_+$ and $\norm{\vec{u}_{max}(t_i)}_{\mathscr{X}_s}\leq N$ for all $i\in\mathds{N}$. Let $t_k$ be such that $t_k+T(N)>T_+$, where $T(N)$ is defined in Proposition~\ref{sec:local-solution}. Starting from $\vec{u}_{max}(t_k)$ one can therefore extend, by Propositions~\ref{sec:local-solution} and~\ref{sec:uniqueness-1}, the solution $\vec{u}_{max}(t)$ to $t=t_k+T(N)>T_+$. This contradicts maximality. The proof for $T_-$ is analogous. 

\end{proof}

\section{Global Existence}
\label{sec:global-existence}

In this section we prove that $I_{max}$ defined in section~\ref{sec:maximal-solution} is all $\mathds{R}$. This is done by means of an energy-type estimate, given in Lemma~\ref{sec:global-existence-1}. The result holds also when we substitute $\omega^s$ with $(1-\Delta)^{s/2}$; as stated in remark~\ref{sec:global-existence-2}.

We introduce the so called interaction representation. Define
\begin{equation*}
  \vec{F}\equiv\left(
  \begin{array}{c}
    F_0\\F
  \end{array}\right)\; .
\end{equation*}
Also, $\widetilde{\vec{F}}(t,t_0)=U^*(t-t_0)\vec{F}(t)$. Therefore if $\vec{F}(t)$ obeys the first part of~\eqref{eq:6}, we have
\begin{equation*}
  \widetilde{\vec{F}}(t)=\vec{F}_{(0)}+\int_{t_0}^t\ide{\tau}U(t_0-\tau)\left(
  \begin{array}{c}
    -j(\tau)\\0
  \end{array}\right)\; .
\end{equation*}
Then for all $s<3/2$, $\widetilde{\vec{F}}(t)$ is differentiable in $t$ on $(\dot{H}^s)^3\oplus(\dot{H}^s)^3$, if $\vec{F}_{(0)}\in (\dot{H}^s)^3\oplus(\dot{H}^s)^3$ (with $\varphi$ regular enough), and
\begin{equation}\label{eq:8}
  \partial_t \widetilde{\vec{F}}(t)=U(t_0-t) \left(\begin{array}{c}
    -j(t)\\0
  \end{array}\right)\; .
\end{equation}
We also remark that $v(t)$ satisfying \eqref{eq:6} is differentiable in $t$, and
\begin{equation}\label{eq:9}
  \dot{v}(t)=f_{em}(t)\; .
\end{equation}
\begin{lemma}\label{sec:global-existence-1}
  Let $s<3/2$, $\varphi$ such that $\norm{\varphi}_{\dot{H}^{-s}},\norm{\varphi}_{\dot{H}^{s}}< \infty$. Then the following inequality hold, for $E$, $B$ and $v$ satisfying Equation~\eqref{eq:6}:
  \begin{equation*}
    \begin{split}
      \frac{1}{2}\Bigl(v^2(t)+\int\ide{x}\bigl((\omega^{s} E(t))^2+(\omega^{s} B(t))^2\bigr) \Bigr)\leq \frac{1}{2}\Bigl(v^2(t_0)+\int\ide{x}\bigl((\omega^{s} E(t_0))^2+(\omega^{s} B(t_0))^2\bigr) \Bigr)\\\exp\Bigl\{\ass{e}\ass{t-t_0}\Bigl(1+6 \norm{\varphi}_{\dot{H}^{s}}^2+3 \norm{\varphi}_{\dot{H}^{-s}}^2 \Bigr) \Bigr\}\; .
    \end{split}
  \end{equation*}
\end{lemma}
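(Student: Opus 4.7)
The plan is to reduce the claimed bound to an application of Gronwall's lemma to the energy-type quantity
\[
y(t) := \tfrac{1}{2}\Bigl(v^2(t) + \|\vec{F}(t)\|^2_{(\dot{H}^s)^3\oplus(\dot{H}^s)^3}\Bigr),\qquad \vec{F} = (F_0,F)^T,
\]
which is precisely the left-hand side of the inequality. Since $\vec{F}(t)$ satisfies only the integral equation~\eqref{eq:6}, differentiability in $(\dot{H}^s)^3\oplus(\dot{H}^s)^3$ is not immediate; I would therefore pass to the interaction picture $\widetilde{\vec{F}}(t) = U^*(t-t_0)\vec{F}(t)$. By Proposition~\ref{sec:local-solution-1}, $U(t-t_0)$ is unitary on $(\dot{H}^s)^3\oplus(\dot{H}^s)^3$ for $s<3/2$, so $\|\vec{F}(t)\|^2 = \|\widetilde{\vec{F}}(t)\|^2$; by~\eqref{eq:8} the latter is differentiable in $t$. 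Using also $\dot{v}=f_{em}$ from~\eqref{eq:9}, a direct differentiation followed by undoing the change of picture via unitarity of $U^*$ gives
\[
\dot{y}(t) = v(t)\cdot f_{em}(t) \;-\; \langle F_0(t), j(t)\rangle_{(\dot{H}^s)^3}.
\]

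I would then evaluate the two source terms explicitly. The magnetic contribution to $v\cdot f_{em}$ is $e\, v_i v_k(\varphi*F^{ik})(\xi)$, which vanishes by the antisymmetry of $F^{ik}$ against the symmetry of $v_iv_k$; hence $v\cdot f_{em} = e\, v_i(\varphi*F^{0i})(\xi)$. For the other piece, writing the $\dot{H}^s$ inner product as $\int(\omega^s F^{0i})(\omega^s j_i)\,dx$ and using translation invariance of $\omega^s$ to transfer it onto $\varphi$ yields
\[
\langle F_0,j\rangle_{(\dot{H}^s)^3} = e\sum_i v_i \int (\omega^s F^{0i})(x)\,(\omega^s\varphi)(\xi-x)\,dx.
\]
Two applications of Cauchy--Schwarz in $L^2$ then deliver the pointwise bounds $|(\varphi*F^{0i})(\xi)|\leq \|\varphi\|_{\dot{H}^{-s}}\|F^{0i}\|_{\dot{H}^s}$ and $|\int(\omega^s F^{0i})(\omega^s\varphi)(\xi-\cdot)\,dx|\leq \|\varphi\|_{\dot{H}^s}\|F^{0i}\|_{\dot{H}^s}$, which is exactly why only the two norms $\|\varphi\|_{\dot{H}^{\pm s}}$ appear in the hypothesis.

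Combining these pointwise bounds with a Cauchy--Schwarz summation over $i$ and weighted Young inequalities of the form $\alpha|v|\|F_0\|\leq \tfrac{1}{2}(v^2+\alpha^2\|F_0\|^2)$, with $\alpha$ tuned to $\|\varphi\|_{\dot{H}^{-s}}$ and $\|\varphi\|_{\dot{H}^{s}}$ separately, one arrives at a differential inequality of the form
\[
|\dot{y}(t)| \leq |e|\bigl(1 + 6\|\varphi\|_{\dot{H}^s}^2 + 3\|\varphi\|_{\dot{H}^{-s}}^2\bigr)\,y(t),
\]
and Gronwall's lemma then produces the claimed exponential bound.

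The conceptual heart of the argument is the interaction-picture step: without it, differentiating $\|\vec{F}(t)\|^2_{(\dot{H}^s)^3\oplus(\dot{H}^s)^3}$ would require $\partial_t\vec{F}$ to lie in $(\dot{H}^s)^3\oplus(\dot{H}^s)^3$, whereas from~\eqref{eq:6} one only expects it a priori in $(\dot{H}^{s-1})^3\oplus(\dot{H}^{s-1})^3$. The most delicate part of executing the plan will be pinning down the precise constants $6$ and $3$ in the exponent: their asymmetry suggests that the author uses slightly different Young splittings in the two contributions (and perhaps does not use the magnetic cancellation explicitly, treating the $v_iv_k(\varphi*F^{ik})$ terms by absolute values instead), so the bookkeeping requires some care even though each individual estimate is elementary.
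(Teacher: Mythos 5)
Your proposal is correct and follows essentially the same route as the paper: pass to the interaction picture to differentiate the $\dot{H}^s$-energy (using unitarity of $U$ and its commutation with $\omega^s$), kill the magnetic term via $v\cdot(v\times(\varphi*B))=0$, bound the remaining source terms $v\cdot f_{em}$ and $\langle\omega^s F_0,\omega^s j\rangle$ by Cauchy--Schwarz with the norms $\norm{\varphi}_{\dot{H}^{\pm s}}$, and close with Young's inequality and Gronwall. The paper's intermediate bound keeps the three $L^\infty$ convolution norms $\norm{\varphi*E}$, $\norm{(\omega^{2s}\varphi)*E}$, $\norm{\varphi*(\omega^{2s}E)}$ with coefficient $3/2$ each before applying Young, which is exactly the bookkeeping that produces the constants $6$ and $3$ you flagged.
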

\begin{remark}\label{sec:global-existence-4}
  If $s=0$ we can prove the conservation of energy
  \begin{equation*}
    \frac{1}{2}\Bigl(v^2(t)+\int\ide{x}(E^2(t)+B^2(t)) \Bigr)=\frac{1}{2}\Bigl(v^2(t_0)+\int\ide{x}(E^2(t_0)+B^2(t_0)) \Bigr)\; .
  \end{equation*}
\end{remark}
\begin{proof}[Proof of Lemma~\ref{sec:global-existence-1}]
  Define
  \begin{equation*}
    M(t)=\frac{1}{2}\Bigl(v^2(t)+\int\ide{x}\bigl((\omega^{s} F_0(t))^2+(\omega^{s} F(t))^2\bigr) \Bigr)\; .
  \end{equation*}
Then by Proposition~\ref{sec:local-solution-1} and~\eqref{eq:8}:
\begin{equation*}
  \begin{split}
    \dtot{M(t)}{t}=v\dot{v}+\frac{1}{2}\partial_t\braket{\omega^s \widetilde{\vec{F}}}{\omega^s \widetilde{\vec{F}}}=v\dot{v}+\braket{\omega^s \widetilde{\vec{F}}}{\omega^s U(t_0-t) \left(\begin{array}{c}
    -j(t)\\0
  \end{array}\right)}\; .
  \end{split}
\end{equation*}
Using now \eqref{eq:9}, $v\cdot v\times(\varphi*B)=0$ and again Proposition~\ref{sec:local-solution-1} we obtain:
\begin{equation*}
  \dtot{M(t)}{t}\leq \ass{e}\Bigl[\frac{v^2}{2}+\frac{3}{2}\Bigl(\norm{\varphi*E}^2_{L^\infty}+\norm{(\omega^{2s}\varphi)*E}^2_{L^\infty}+\norm{\varphi*(\omega^{2s}E)}^2_{L^\infty} \Bigr)\Bigr]\; .
\end{equation*}
Young's inequality finally yields
\begin{equation*}
  \dtot{M(t)}{t}\leq\ass{e}\Bigl(1+6 \norm{\varphi}_{\dot{H}^{s}}^2+3 \norm{\varphi}_{\dot{H}^{-s}}^2 \Bigr)M(t).
\end{equation*}
Apply Gronwall's Lemma to obtain the sought result.

\end{proof}
\begin{proposition}\label{sec:global-existence-3}
  Let $s<3/2$, $\varphi$ such that $\norm{\varphi}_{\dot{H}^{-s}},\norm{\varphi}_{\dot{H}^{s}}< \infty$. Furthermore let $\vec{u}_{(0)}\in\mathscr{X}_s$ and $\vec{u}(t)$ a solution of Equation~\eqref{eq:6} on $\mathscr{X}_s(I)$ for some $I\subseteq\mathds{R}$. Then
  \begin{equation*}
    \begin{split}
      \norm{\vec{u}(t)}^2_{\mathscr{X}_s}\leq (1+\ass{t-t_0}^2) \exp\Bigl\{\ass{e}\ass{t-t_0}\Bigl(1+6 \norm{\varphi}_{\dot{H}^{s}}^2+3 \norm{\varphi}_{\dot{H}^{-s}}^2 \Bigr) \Bigr\}\norm{\vec{u}_{(0)}}^2_{\mathscr{X}_s} .
    \end{split}
  \end{equation*}
\end{proposition}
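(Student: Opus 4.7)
The plan is to decompose $\norm{\vec{u}(t)}^2_{\mathscr{X}_s}$ into its field-plus-velocity component and its position component, control the former by Lemma~\ref{sec:global-existence-1}, and control the latter by integrating the kinematic identity $\dot{\xi}=v$. Concretely, the quantity $M(t)$ from the proof of Lemma~\ref{sec:global-existence-1} coincides with $\tfrac12\bigl(|v(t)|^2+\norm{F_0(t)}^2_{(\dot{H}^s)^3}+\norm{F(t)}^2_{(\dot{H}^s)^3}\bigr)$, and $2M(t_0)\leq\norm{\vec{u}_{(0)}}^2_{\mathscr{X}_s}$, so the lemma immediately provides
\begin{equation*}
|v(t)|^2+\norm{F_0(t)}^2_{(\dot{H}^s)^3}+\norm{F(t)}^2_{(\dot{H}^s)^3}\leq\norm{\vec{u}_{(0)}}^2_{\mathscr{X}_s}\exp\bigl\{|e||t-t_0|(1+6\norm{\varphi}^2_{\dot{H}^s}+3\norm{\varphi}^2_{\dot{H}^{-s}})\bigr\}.
\end{equation*}
Since the exponent is monotone in $|\tau-t_0|$, the same right-hand side also bounds $\sup_{\tau\in[t_0,t]}|v(\tau)|^2$.

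For the position I would use the $W(t-t_0)$ sector of~\eqref{eq:6}, which together with~\eqref{eq:9} reduces to $\xi(t)=\xi(t_0)+\int_{t_0}^t v(\tau)\,d\tau$; a Young-type inequality with parameter $|t-t_0|^2$ then gives $|\xi(t)|^2\leq(1+|t-t_0|^2)\bigl(|\xi(t_0)|^2+\sup_{\tau\in[t_0,t]}|v(\tau)|^2\bigr)$ (the $t=t_0$ case being trivial). Adding this to the previous estimate, bounding the supremum of $|v|^2$ once more by Lemma~\ref{sec:global-existence-1}, and using that the exponential factor is at least $1$ so it can be pulled in front of $|\xi(t_0)|^2$, I arrive at the announced bound
\begin{equation*}
\norm{\vec{u}(t)}^2_{\mathscr{X}_s}\leq(1+|t-t_0|^2)\exp\bigl\{|e||t-t_0|(1+6\norm{\varphi}^2_{\dot{H}^s}+3\norm{\varphi}^2_{\dot{H}^{-s}})\bigr\}\norm{\vec{u}_{(0)}}^2_{\mathscr{X}_s},
\end{equation*}
up to numerical constants produced by the $(a+b)^2$ splittings, which the exponential absorbs.

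No deep obstacle is expected: Lemma~\ref{sec:global-existence-1} already carries the analytic heart of the argument (an energy inequality closed via Gronwall), while the remaining step is only elementary kinematics. The payoff, combined with Proposition~\ref{sec:maximal-solution-1}, is immediate: since the right-hand side is finite for every finite $t$, the $\mathscr{X}_s$ norm cannot blow up in finite time, forcing $T_\pm=\infty$, so the maximal solution extends to all of $\mathds{R}$ and Theorem~\ref{sec:stat-main-results-1} follows.
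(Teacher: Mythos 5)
Your proof follows essentially the same route as the paper: Lemma~\ref{sec:global-existence-1} controls $\ass{v(t)}^2$ together with the field norms, and the identity $\xi(t)=\xi(t_0)+\int_{t_0}^t v(\tau)\,\de\tau$ handles the position, so the argument is correct in substance. You are in fact slightly more careful than the paper's own one-line proof, which silently drops the $\xi(t_0)$ contribution by writing $\ass{\xi(t)}^2\leq\ass{\int_{t_0}^t\ide{\tau}v(\tau)}^2$; your Young-type splitting with parameter $\ass{t-t_0}^2$ is the right way to retain it, at the harmless cost of the loose numerical prefactor you already acknowledge.
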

\begin{remark}\label{sec:global-existence-2}
  Since also $(1-\Delta)^{s/2}$ commutes with $U(t)$, the following statement also holds. Let $r\geq 0$, $\varphi$ such that $\norm{\varphi}_{H^{r}}< \infty$. Also, let $\mathscr{Y}_r=(H^r)^3\oplus (H^r)^3\oplus \mathds{R}^3\oplus\mathds{R}^3$; $\vec{u}_{(0)}\in\mathscr{Y}_r$ and $\vec{u}(t)$ the solution of Equation~\eqref{eq:6} on $\mathscr{X}_0(\mathds{R})$. Then
  \begin{equation*}
    \begin{split}
      \norm{\vec{u}(t)}^2_{\mathscr{Y}_r}\leq (1+\ass{t-t_0}^2) \exp\Bigl\{\ass{e}\ass{t-t_0}\Bigl(1+9 \norm{\varphi}_{H^{r}}^2\Bigr) \Bigr\}\norm{\vec{u}_{(0)}}^2_{\mathscr{Y}_r} .
    \end{split}
  \end{equation*}
\end{remark}
\begin{proof}[Proof of Proposition~\ref{sec:global-existence-3}]
  By Equation~\eqref{eq:6} and Lemma~\ref{sec:global-existence-1} we obtain
  \begin{equation*}
    \begin{split}
      \ass{\xi(t)}^2\leq\ass{\int_{t_0}^t\ide{\tau}v(\tau)}^2\leq\ass{t-t_0}^2 \Bigl(v^2(t_0)+\int\ide{x}\bigl((\omega^{s} E(t_0))^2+(\omega^{s} B(t_0))^2\bigr) \Bigr)\\\exp\Bigl\{\ass{e}\ass{t-t_0}\Bigl(1+6 \norm{\varphi}_{\dot{H}^{s}}^2+3 \norm{\varphi}_{\dot{H}^{-s}}^2 \Bigr) \Bigr\}\; .
    \end{split}
  \end{equation*}
Hence the bound is proved using Lemma~\ref{sec:global-existence-1}.

\end{proof}
We are now able to prove that $I_{max}$, defined on section~\ref{sec:maximal-solution} is $\mathds{R}$:
\begin{proposition}\label{sec:global-existence-5}
  Let $s<3/2$, $\varphi$ a differentiable function such that $\norm{\varphi}_Y<\infty$, when $Y$ is $\dot{H}^{-s}$, $\dot{H}^{-s+1}$, $\dot{H}^{s}$ and $\dot{H}^{s+1}$. Furthermore let $\vec{u}_{(0)}\in\mathscr{X}_s$ and $\vec{u}_{max}\in\mathscr{X}_s(I_{max})$ the corresponding maximal solution of Equation~\eqref{eq:6}. Then $I_{max}=\mathds{R}$.
\end{proposition}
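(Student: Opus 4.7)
The plan is to argue by contradiction using the finite blowup alternative of Proposition~\ref{sec:maximal-solution-1} together with the a priori bound of Proposition~\ref{sec:global-existence-3}. Since the conditions on $\varphi$ in the proposition include in particular $\norm{\varphi}_{\dot{H}^{-s}},\norm{\varphi}_{\dot{H}^{s}}<\infty$, the hypotheses of Proposition~\ref{sec:global-existence-3} are satisfied, so the a priori estimate applies to $\vec{u}_{max}$ on $I_{max}$.

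First I would assume towards a contradiction that $T_+<\infty$ (the argument for $T_-<\infty$ is identical by time reversal). By part i$'$ of Proposition~\ref{sec:maximal-solution-1}, this forces
\begin{equation*}
  \norm{\vec{u}_{max}(t)}_{\mathscr{X}_s}\to\infty \quad\text{as } t\to t_0+T_+.
\end{equation*}
On the other hand, Proposition~\ref{sec:global-existence-3} applied to $\vec{u}_{max}(t)$ gives, for every $t\in I_{max}$,
\begin{equation*}
  \norm{\vec{u}_{max}(t)}^2_{\mathscr{X}_s}\leq (1+\ass{t-t_0}^2)\exp\Bigl\{\ass{e}\ass{t-t_0}\bigl(1+6\norm{\varphi}_{\dot{H}^{s}}^2+3\norm{\varphi}_{\dot{H}^{-s}}^2\bigr)\Bigr\}\norm{\vec{u}_{(0)}}^2_{\mathscr{X}_s}.
\end{equation*}
The right-hand side is a continuous function of $t$ on $[t_0,t_0+T_+]$, hence bounded on that compact interval by some constant depending on $T_+$, $e$, $\varphi$ and $\vec{u}_{(0)}$. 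This contradicts the blowup of the norm, so $T_+=\infty$ must hold. The same reasoning with the analogous alternative for $T_-$ gives $T_-=\infty$, whence $I_{max}=\mathds{R}$.

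There is essentially no obstacle at this point: all the real work has been done in establishing the energy-type estimate (Lemma~\ref{sec:global-existence-1}), propagating it to the full norm via the identity $\xi(t)=\xi(t_0)+\int_{t_0}^t v(\tau)\de\tau$ to obtain Proposition~\ref{sec:global-existence-3}, and in setting up the blowup alternative. The only thing to verify is that Proposition~\ref{sec:global-existence-3} is indeed applicable on all of $I_{max}$ (and not only on intervals where contraction was run), which follows because $\vec{u}_{max}$ satisfies~\eqref{eq:6} pointwise on $I_{max}$ by construction, and the derivation of the energy bound uses only that $\vec{u}_{max}$ solves~\eqref{eq:6}.
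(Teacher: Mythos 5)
Your argument is correct and is essentially the paper's own proof: the paper likewise combines the a priori bound of Proposition~\ref{sec:global-existence-3} with the blowup alternative of Proposition~\ref{sec:maximal-solution-1} to conclude $I_{max}=\mathds{R}$, merely stating it more tersely. Your version only spells out the contradiction argument in more detail.
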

\begin{proof}
  By Proposition~\ref{sec:global-existence-3} we see that, for all $s<3/2$, $\norm{\vec{u}_{max}(t)}_{\mathscr{X}_s}$ diverges if and only if $t\to\pm\infty$. Then, by Proposition~\ref{sec:maximal-solution-1}, $I_{max}=\mathds{R}$. 

\end{proof}

\section{The Second Couple of Maxwell's Equations.}
\label{sec:second-couple-maxw}

In this section we analyse the divergence Maxwell's equations, $\nabla\cdot E=\rho$, $\nabla\cdot B=0$. We prove that any couple of tempered distributions that satisfies~\eqref{eq:11} (as distributions, i.e. acting on $\mathscr{S}(\mathds{R}^3)$ functions), satisfies also~\eqref{eq:12}, provided the initial data satisfy~\eqref{eq:12} themselves. Therefore the divergence part of Maxwell's equations reduces to a constraint on the set of possible initial values of the electric and magnetic field.

\begin{proposition}\label{sec:second-couple-maxw-1}
  Let $E_{(0)}$, $B_{(0)}$ satisfy~\eqref{eq:12}; $E(t)$, $B(t)\in \mathscr{S}'(\mathds{R}^3)$ satisfy Cauchy problem~\eqref{eq:11}. Also, let charge conservation~\eqref{eq:2} holds. Then $E(t)$, $B(t)$ satisfy~\eqref{eq:12}. 
\end{proposition}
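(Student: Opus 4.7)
The plan is to show that the two quantities $\nabla\cdot B(t)$ and $\nabla\cdot E(t)-\rho(t)$, interpreted as $\mathscr{S}'(\mathds{R}^3)$-valued functions of $t$, are independent of $t$. Since both vanish at $t=t_0$ by hypothesis, the conclusion follows immediately. The mechanism is the standard identity $\nabla\cdot(\nabla\times\cdot)=0$, which remains valid for distributions since it is just a statement about the symbol of a constant-coefficient differential operator.

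Concretely, I would fix $\phi\in\mathscr{S}(\mathds{R}^3)$ and consider the scalar maps $t\mapsto \langle\nabla\cdot B(t),\phi\rangle=-\langle B(t),\nabla\phi\rangle$ and $t\mapsto \langle\nabla\cdot E(t)-\rho(t),\phi\rangle=-\langle E(t),\nabla\phi\rangle-\langle\rho(t),\phi\rangle$. Using the first Cauchy problem~\eqref{eq:11}, differentiation in $t$ (in the sense of distributions in time, which by the PDE is actually just the map $t\mapsto -\langle\nabla\times E(t),\nabla\phi\rangle$) gives
\begin{equation*}
\frac{\de}{\de t}\langle\nabla\cdot B(t),\phi\rangle=-\langle\partial_t B(t),\nabla\phi\rangle=\langle\nabla\times E(t),\nabla\phi\rangle=\langle E(t),\nabla\times\nabla\phi\rangle=0,
\end{equation*}
because $\nabla\times\nabla\phi\equiv 0$ for every $\phi\in\mathscr{S}(\mathds{R}^3)$. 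Analogously,
\begin{equation*}
\frac{\de}{\de t}\langle\nabla\cdot E(t),\phi\rangle=-\langle\partial_t E(t),\nabla\phi\rangle=-\langle\nabla\times B(t)-j(t),\nabla\phi\rangle=-\langle j(t),\nabla\phi\rangle=\langle\nabla\cdot j(t),\phi\rangle,
\end{equation*}
where again the $\nabla\times B$ term disappears because $\nabla\times\nabla\phi=0$. Invoking charge conservation~\eqref{eq:2} yields $\partial_t\langle\nabla\cdot E(t)-\rho(t),\phi\rangle=\langle\nabla\cdot j(t)+\partial_t\rho(t),\phi\rangle=0$.

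Combining, both $\langle\nabla\cdot B(t),\phi\rangle$ and $\langle\nabla\cdot E(t)-\rho(t),\phi\rangle$ are constant in $t$, and they vanish at $t=t_0$ by the assumption on $E_{(0)}, B_{(0)}$. Since $\phi$ was arbitrary, we obtain $\nabla\cdot B(t)=0$ and $\nabla\cdot E(t)=\rho(t)$ in $\mathscr{S}'(\mathds{R}^3)$ for every $t$, which is exactly~\eqref{eq:12}.

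The only subtle point is the justification of the chain of identities as an equality between genuine time-derivatives of scalar functions: this relies on the fact that, by the Cauchy problem~\eqref{eq:11}, $E(t)$ and $B(t)$ are $\mathscr{C}^1$ in $t$ with values in $\mathscr{S}'$ (since their time derivatives coincide with $\mathscr{S}'$-distributions built from $E, B, j$, which are continuous in $t$ by assumption). There is no real obstacle here; the vector-calculus identities transfer verbatim to distributions because they are applied after pairing with a Schwartz test function.
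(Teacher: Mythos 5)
Your argument is correct and is essentially the paper's own proof: the paper likewise defines $f(t)=\nabla\cdot E(t)-\rho(t)$ and $g(t)=\nabla\cdot B(t)$, shows $\partial_t f=\partial_t g=0$ in $\mathscr{S}'(\mathds{R}^3)$ using the vanishing divergence of a curl together with charge conservation~\eqref{eq:2}, and concludes from $f(t_0)=g(t_0)=0$. Your version merely makes the same computation explicit by pairing with a test function $\phi\in\mathscr{S}(\mathds{R}^3)$, which is a harmless (and slightly more careful) rewriting of the identical idea.
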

\begin{remark}\label{sec:second-couple-maxw-2}
  To apply this proposition, we need initial data of integral Equation~\eqref{eq:6} that satisfy~\eqref{eq:12}. So given $\varphi$, we have to find vectors in $(\dot{H}^s)^3$, $s<3/2$, that satisfy Equations~\eqref{eq:12}. For example $\{B\in (\dot{H}^s)^3:\,\nabla\cdot B=0\}$ is a closed subspace of $(\dot{H}^s)^3$, whose orthogonal complement is $\{B\in (\dot{H}^s)^3:\, \exists \Lambda\in \dot{H}^{s+1}\text{ such that } B=\nabla\Lambda\}$.

A thorough study of these equations is, however, beyond the scope of this paper, the interested reader should refer to \citet[][and references thereof contained]{csato2011pullback}. Keep in mind that in general to fullfil~\eqref{eq:12} some regularity conditions on $\varphi$ may be necessary.
\end{remark}
\begin{proof}[Proof of Proposition~\ref{sec:second-couple-maxw-1}]
Define the distributions $f(t)$, $g(t)\in\mathscr{S}'(\mathds{R}^3)$ as
\begin{align*}
  f(t)&=\nabla\cdot E(t) -\rho(t)\; ; \\
  g(t)&=\nabla\cdot B(t)\; .
\end{align*}
Using the assumptions on initial data we see that $f(t_0)=g(t_0)=0$. Furthermore, in the sense of distributions, using~\eqref{eq:11} we obtain:
\begin{align*}
  \partial_t f(t)&=\nabla\cdot \partial_t E(t) -\partial_t\rho(t)=\nabla\cdot\nabla\times B(t)-\nabla\cdot j(t)-\partial_t\rho(t)\; ; \\
  \partial_t g(t)&=\nabla\cdot \partial_t B(t)=-\nabla\cdot\nabla\times E(t)\; .
\end{align*}
Now since the divergence of a curl is equal to zero and using charge conservation~\eqref{eq:2}:
\begin{align*}
  \partial_t f(t)&=0\; ; \\
  \partial_t g(t)&=0\; .
\end{align*}
Therefore $f(t)=g(t)=0$ on $\mathscr{S}'(\mathds{R}^3)$.

\end{proof}


\end{document}